\numberwithin{equation}{section} %para numerar ecuaciones por sección
\theoremstyle{plain}
\newtheorem{thm}{Theorem}[section]
\newtheorem{prop}[thm]{Proposition}
\newtheorem{lemma}[thm]{Lemma}
\theoremstyle{definition}
\newtheorem{defin}[thm]{Definition}
\newtheorem{rmk}[thm]{Remark}
\newcommand{\R}{\mathbb{R}}
\newcommand{\MP}{\mathcal{MP}}
\newcommand{\A}{\mathbb{A}}
\def\XXint#1#2#3{{\setbox0=\hbox{$#1{#2#3}{\int}$ }
\vcenter{\hbox{$#2#3$ }}\kern-.6\wd0}}
\newcommand\hhat[1]{%
\savestack{\tmpbox}{\stretchto{%
  \scaleto{%
    \scalerel*[\widthof{\ensuremath{#1}}]{\kern.1pt\mathchar"0362\kern.1pt}%
    {\rule{0ex}{\textheight}}%WIDTH-LIMITED CIRCUMFLEX
  }{\textheight}% 
}{2.4ex}}%
\stackon[-6.9pt]{#1}{\tmpbox}%
}
\title[Boundary and scattering rigidity for MP-systems]{The boundary and scattering rigidity problems for simple MP-systems}
\author{Sebastián Muñoz-Thon}
\address{Department of Mathematics, Purdue University, West Lafayette, IN 47907.}
\email{smunozth@purdue.edu}
\begin{document}

\begin{abstract}
We consider a compact Riemannian manifold with
boundary, endowed with a magnetic field and a potential. This is called an $\MP$-system. On simple $\MP$-systems, we consider both the boundary rigidity problem and the scattering rigidity problem. Under the assumption of knowing the boundary action functions or the scattering relations for only one energy level, we prove that we can recover the $\MP$-system up to a gauge. To obtain the results, we reduce the $\MP$-system to the corresponding magnetic system and we apply the results from \cite{dpsu} on simple magnetic systems.
\end{abstract}

\maketitle

\section{Introduction} %-------------------------------------------------

\subsection{Known and new results} \label{subsec:known} %-------------------------------------------------

The boundary rigidity problem is a classical question in inverse problems. Given a smooth ($C^{\infty}$) compact Riemannian manifold with smooth boundary, the problem asks to what extent one can recover the metric $g$ by knowing the boundary distance function $d_{g}|_{\partial M \times \partial M}$ (\cite{michel}). Assuming that $(M,g)$ is a simple surface (see \cite{psu}*{Section 3.8} for a several equivalent definitions of simple manifolds), the problem was solved in \cite{pu2005}, proving that one can recover the metric up to an isometry that fixes the boundary. Recently, the problem was solved in higher dimensions for compact simple manifolds with further geometric conditions (non-positive curvature, non-negative curvature, absence of focal points), and for manifolds with strictly convex boundary satisfying a foliation condition, see \cite{suv}. There are results generic results \cite{su}, and for non-convex manifolds \cite{gmt}.

In a more general setting, one can consider the same problem for \emph{magnetic systems}, that is, for a smooth compact Riemannian manifold with smooth boundary $(M,g)$ endowed with a closed 2-form $\Omega$. These systems model the motion of a unit charge of unit mass in a magnetic field, but they also appear in other contexts, such as, differential and symplectic geometry, and dynamical systems, see \cite{dpsu} and the references there. The boundary rigidity problem in the presence of a magnetic field asks to what extent one can recover the metric $g$ and the magnetic field $\Omega$ by knowing the boundary action function $\mathbb{A}_{g,\Omega}|_{\partial M \times \partial M}$ (see \cite{dpsu} for the definition of $\mathbb{A}$). This problem was solved (up to a natural gauge) in \cite{dpsu} (under a simplicity assumption), on a conformal class, for analytic magnetic systems, for surfaces, and for metric close to a generic set of metrics. It is also worth mentioning that similar problem have also been studied. The \emph{scattering rigidity problem} for magnetic system was studied also in \cite{dpsu}, \cite{herrerossca} and \cite{herrerosmag}, while the \emph{lens rigidity problem} was studied in \cite{zhou18}.

The next step is to consider the boundary rigidity problem for $\MP$-systems. An $\MP$-system consist of a smooth compact Riemannian manifold with smooth boundary $(M,g)$, a closed 2-form $\Omega$, and a smooth function $U$. The acronym $\MP$ was first (to the author knowledge) used in \cite{az}, and stands for magnetic and potential. Likewise, this acronym comes from the fact that these kind of systems works with curves that describe the motion of a particle on a Riemannian manifold under the influence of a magnetic field represented by $\Omega$, and a potential field represented by the function $U$. These kind of systems appear in mechanics, see \cite{kozlov}, \cite{arnold}, \cite{akn}, \cite{cggmw}, \cite{maraner}. They also appear when one studies geodesics on Lorentzian manifolds endowed with stationary metrics \cite{germinario}, \cite{bg}, \cite{plamen}, in inverse problems for the acoustic wave equation from phaseless measurements \cite{iw}, and in inverse problems in transport equations with external forces on Euclidean domains \cite{lz}.

Given an $\MP$-system $(M,g,\Omega,U)$, the magnetic field $\Omega$ induces a map $Y \colon TM \to TM$ given by
\[ \Omega_{x}(u,v)=(Y_{x}u,v)_{g}, \]
where $u,v \in T_{x}M$. This map is called \emph{Lorentz force} associated to the magnetic field $\Omega$. For simple $\MP$ systems (see Section \ref{sec:prelim} below), $C^{2}$ curves $\sigma \colon [a,b] \to M$ that satisfy
\begin{equation} \label{eq:mp-geo}
    \nabla_{\dot{\sigma}} \dot{\sigma}=Y(\dot{\sigma})-\nabla U(\sigma),
\end{equation}
are called \emph{$\MP$-geodesics}. Here, $\nabla$ is the Levi-Civita connection associated to the metric $g$. Equation \eqref{eq:mp-geo} defines a flow, called the \emph{$\MP$-flow}, and given by
\[ \phi_{t}(x,v)=(\sigma(t),\dot{\sigma}(t)),  \]
where $\sigma$ solves \eqref{eq:mp-geo} and $\sigma(0)=x$, $\dot{\sigma}(0)=v$. See Lemma \ref{lemma:mp_flow} for other interpretations of the $\MP$-flow. For this flow, the energy $E \colon TM \to \R$ given by $E(x,v)=\frac{1}{2}|v|_{g}^{2}+U(x)$ is an integral of motion. Indeed, for $\sigma$ satisfying \eqref{eq:mp-geo}, we have
\[ \frac{d}{dt}E(\sigma(t),\dot{\sigma}(t))=( \nabla_{\dot{\sigma}}\dot{\sigma},\dot{\sigma})_{g}+(\nabla U,\dot{\sigma})=( Y \dot{\sigma},\dot{\sigma})_{g}=\Omega(\dot{\sigma},\dot{\sigma})=0. \]
Then the energy is constant along $\MP$-geodesics. If $\Omega=d\alpha$, it has been shown that $\MP$ geodesics minimize the \emph{time free action} of energy $k$ (see \cite{az}*{Appendix A.1})
\[ \A(\sigma)=\frac{1}{2} \int_{0}^{T}|\dot{\sigma}|_{g}^{2}dt+kT-\int_{0}^{T}(\alpha(\sigma(t),\dot{\sigma}(t))+U(\sigma(t)) )dt, \]
where 
\[ \sigma \in \mathcal{C}(x,y)=\{\sigma \in AC([0,T],M): \sigma(0)=x \text{ and } \sigma(T)=y\}, \]
so that the \emph{Ma\~n\'e action potential} (of energy $k$) is well defined
\begin{equation} \label{eq:mane_act}
    \A(x,y)=\inf_{\gamma \in \mathcal{C}(x,y)}\A(\gamma).
\end{equation}
The ``action'' terminology comes form physics and Lagrangian flows, see \cite{am}, \cite{arnoldclas}, \cite{ci}, \cite{paternain}, \cite{mazzucchelli}.

The \emph{boundary rigidity problem for $\MP$ systems} asks to what extent one can recover the metric $g$, the magnetic field $\Omega$ (or $\alpha$), and the potential $U$, by knowing the boundary action function $\mathbb{A}_{g,\alpha,U}|_{\partial M \times \partial M}$ (of energy $k$). In the same spirit, \emph{scattering rigidity problem for $\MP$ systems} asks to what extent one can recover $(g,\Omega,U)$ by knowing the scattering relation (of energy $k$), see Definition \ref{defin:scattering}. In the flat case, these problems were studied in \cite{jo2007}. The non-flat simple case was studied in \cite{az}. There, the authors prove that for simple $\MP$-systems, the knowledge of the boundary action function for two energy levels allows to recover the system $(g,\alpha,U)$ up to a gauge, in three cases: working on the same conformal class, for analytic $\MP$-systems, and working on surfaces. Furthermore, they give a counterexample that shows that, under their gauge, the result is false if one only knows one energy level. 

In this work, we propose a new notion of gauge for $\MP$ system (see Definition \ref{defin:gauge}). For simple $\MP$-systems (see Definition \ref{defin:simple}), we are able to solve the boundary rigidity problem by the knowledge of the boundary action function for one energy level, up to the new gauge, in the following cases:
\begin{itemize}
    \item in a fixed conformal class (Theorem \ref{thm:bound_rid_conf_class});
    \item for real-analytic $\MP$-systems (Theorem \ref{thm:bound_rid_an});
    \item on surfaces (Theorem \ref{thm:bound_rid_dim2}).
\end{itemize}
The idea of the proof is the following. We reduce the simple $\MP$-system of energy $k$ to the simple magnetic system $(2(k-U)g,\alpha)$, then we use the results in \cite{dpsu}, and finally we ``pull back'' the results to the $\MP$-system. Of course, in the two last steps is where the gauge appears. As is mentioned in \cite{az}, the boundary rigidity problem and the scattering rigidity problem are equivalent for simple manifolds. Hence, our results also solve the scattering rigidity problem. These results are stated at the end of Section \ref{sec:thms}.

\subsection{Structure of the paper}%------------------------------------

In Section \ref{sec:prelim} we briefly summarize some facts about $\MP$-systems and they relation with magnetic systems that would be used in this work. In Section \ref{sec:hamiltonians} we compare different Hamiltonians that appear in the study of $\MP$-systems. This serves as a motivation for the introduction of the new gauge (Definition \ref{defin:gauge}) in Section \ref{sec:defin}. In Section \ref{sec:thms} we prove the boundary and scattering rigidity results mentioned at the end of Subsection \ref{subsec:known}. Finally, in Section \ref{sec:rmk} we give some remarks and possible future directions of research, and in the appendix we give characterizations of the $\MP$-flow from Lagrangian and Hamiltonian point of view.

\subsection*{Acknowledgments}%----------------------------

The author would like to thank Plamen Stefanov for suggesting this problem, for helpful discussions about magnetic systems, and for helpful comments on a previous version of this manuscript. Also, the author would like to thank Gabriel Paternain and Gunther Uhlmann for helpful suggestions on a preliminary version for this work. The author was partly supported by NSF Grant DMS-2154489.

%-------------------------------------------------

\section{Preliminaries} \label{sec:prelim}

In this section we review some results for $\MP$-systems that will be used throughout the paper. 

Recall that an $\MP$-system is consist of tuple $(M,g,\Omega,U)$, where $M$ is a compact smooth manifold with smooth boundary, $g$ is a Riemannian metric, $\Omega$ is a closed 2-form and $U$ is a smooth function. Since we are going to work on a fixed manifold $M$, we will going to refer to the triple $(g,\Omega,U)$ as an $\MP$-system. 

As was mentioned in Subsection \ref{subsec:known}, $\MP$-geodesics has constant energy. Given $k \in \R$, we define $S^{k}M:=\{E=k\}$. We will assume always that $k>\max_{M} U$ (so that $S^{k}M$ is a non-empty level set, see also \cite{az}*{Appendix A} and \cite{ci}*{Chapter 2}). Let $\nu(x)$ be the inward unit vector normal to $\partial M$ at $x$, and set
\[
\partial_{\pm} S^{k}M:=\{(x, v) \in S^{k}M: x \in \partial M, \pm (v, \nu(x))_{g(x)} \geq 0\}.
\]
For $x \in M$, the $\MP$-exponential map at $x$ is the map $\exp _x^{\MP} \colon T_{x} M \to M$ given by
\[
\exp _x^{\MP}(t v)=\pi \circ \phi_t(v),
\]
where $t \geq 0$, $v \in S_{x}^{k} M$, and $\pi \colon TM \to M$ is the canonical projection. 

Let $\Lambda$ denotes the second fundamental form of $\partial M$. Consider a manifold $M_1$ such that $M_1^{\text {int }} \supset M$. Extend $g$, $\Omega$ and $U$ to $M_1$ smoothly, preserving the former notation for extensions. $M$ is said to be \emph{$\MP$-convex} at $x \in \partial M$ if there is a neighborhood $O$ of $x$ in $M_1$ such that all $\MP$-geodesics of constant energy $k$ in $O$, passing through $x$ and tangent to $\partial M$ at $x$, lie in $M_1 \setminus M^{\text {int }}$. If, in addition, these geodesics do not intersect $M$ except for $x$, we say that $M$ is \emph{strictly $\MP$-convex} at $x$. By \cite{az}*{Lemma~A.2}, strictly $\MP$-convexity implies
\[
\Lambda(x, v)>\langle Y_x(v), \nu(x)\rangle-d_x U(\nu(x))
\]
for all $(x, v) \in S^{k}(\partial M)$. 

\begin{defin} \label{defin:simple}
We say that $M$ is ($\MP$) \emph{simple} with respect to $(g, \Omega, U)$ if $\partial M$ is strictly $\MP$-convex and the $\MP$-exponential map $\exp_{x}^{\MP} \colon (\exp_x^{\MP})^{-1}(M) \to M$ is a diffeomorphism for every $x \in M$.    
\end{defin}

In this case, $M$ is diffeomorphic to the unit ball of $\R^n$. Hence, Poincaré's lemma implies that $\Omega$ is exact, that is, there exist a 1-form $\alpha$ such that $\Omega=d\alpha$. and we call $\alpha$ to be the magnetic potential.

Henceforth we call $(g, \alpha, U)$ a simple $\MP$-system on $M$. We will also say that $(M, g, \alpha, U)$ is a simple $\MP$-system. When $\alpha=0$ and $U=0$, these notions coincide with the usual notion of simple Riemannian manifold, see \cite{psu}*{Section~3.8}.

For $(x, v) \in \partial_{+} S^{k}M$, let $\tau(p, v)$ be the time exit time function for the the $\MP$-geodesic $\sigma$ with $\sigma(0)=p$, $\dot{\sigma}(0)=v$. By \cite{az}*{Lemma~A.3} we have that for a simple $\MP$-system, the function $\tau \colon \partial_{+} S^{k}M \to \R$ is smooth.

\begin{defin} \label{defin:scattering}
The \emph{scattering relation} $\mathcal{S} \colon \partial_{+} S^{k}M \to \partial_{-} S^{k}M$ of an $\MP$-system $(M, g, \alpha, U)$ is defined as
\[
\mathcal{S}(p,v)=(\phi_{\tau (p, v)}(p, v))=(\sigma(\tau(p,v)), \dot{\sigma}(\tau(p,v))).
\]    
\end{defin}

Note that the scattering relation depends on the energy level.

\begin{defin}
Given a simple $\MP$ system $(g,\alpha,U)$ of energy $k$, we associate to it the magnetic system $(2(k-U)g,\alpha)$ of energy $\frac{1}{2}$, which we call \emph{reduced magnetic system}.    
\end{defin}

The interplay between an $\MP$-system an its magnetic reduction was studied in detail in \cite{az}. We now state some results that are useful for this work.

\begin{lemma}[\cite{az}*{Proposition~1, 2, 3}] \label{lemma:basics_mp}
Let $(g,\alpha,U)$ be $\MP$-system with energy $k$ an let $(G,\alpha):=(2(k-U)g,\alpha)$ be its reduction to a magnetic system of energy $\frac{1}{2}$.
\begin{enumerate}
    \item If $k>\max_{x \in M}U$ and $\sigma$ is a $\MP$-geodesic of energy $k$, then there exists a reparametrization of $\sigma$ that is a unit speed magnetic geodesic for the reduced system.
    \item $(g,\alpha,U)$ is simple (in the $\MP$ sense) if and only if $(G,\alpha)$ is simple (in the magnetic sense).
    \item Let $\A$ be the Ma\~n\'e's action potential of energy $k$ for $(g, \alpha, U)$ and $\mathbb{A}_G$ be the Ma\~n\'e's action potential of energy 1/2 for the simple magnetic system $(G, \alpha)$, then $\A|_{\partial M \times \partial M}=\A_{G}|_{\partial M \times \partial M}$.
\end{enumerate}
\end{lemma}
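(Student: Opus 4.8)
The plan is to establish all three assertions through the classical Maupertuis--Jacobi correspondence between a mechanical system at fixed energy and the (magnetic) geodesic flow of a conformally rescaled metric. Throughout, write $c:=2(k-U)>0$, so that $G=cg$, and set $\varphi:=\tfrac12\log c$, i.e.\ $G=e^{2\varphi}g$; then $\nabla\varphi=-c^{-1}\nabla U$. Moreover, since $\Omega=d\alpha$ and $(Y_xu,v)_g=\Omega(u,v)=(Y_Gu,v)_G=c\,(Y_Gu,v)_g$, the Lorentz force of the reduced system is $Y_G=c^{-1}Y$. I would prove the parts in the order (1), (3), (2), since (3) is essentially independent while (2) builds on (1) together with a boundary computation.

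For (1), take an $\MP$-geodesic $\sigma$ of energy $k$, so that $|\dot{\sigma}|_g^2=2(k-U)=c$ along $\sigma$, and reparametrize by $\gamma(s)=\sigma(t(s))$ with $t'(s)=\lambda(s)$. A direct check gives $|\gamma'|_G^2=\lambda^2c^2$, so the unit $G$-speed requirement forces $\lambda=c^{-1}$. I would then compute the $G$-covariant derivative of $\gamma'$ along $\gamma$ using the conformal transformation rule
\[
\frac{D^G}{ds}\gamma'=\frac{D}{ds}\gamma'+2\,d\varphi(\gamma')\,\gamma'-g(\gamma',\gamma')\,\nabla\varphi,
\]
substitute the $\MP$-geodesic equation $\nabla_{\dot{\sigma}}\dot{\sigma}=Y\dot{\sigma}-\nabla U$, and observe that the two $\nabla U$-terms cancel identically, while the remaining longitudinal contribution is exactly $\lambda'\dot{\sigma}-\tfrac{2\lambda^2}{c}d U(\dot{\sigma})\,\dot{\sigma}$. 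The choice $\lambda=c^{-1}$ satisfies $\lambda'=\tfrac{2\lambda}{c^2}dU(\dot{\sigma})$, which makes this term vanish, leaving $\tfrac{D^G}{ds}\gamma'=\lambda^2Y\dot{\sigma}=Y_G\gamma'$. Hence $\gamma$ is a unit-speed magnetic geodesic of $(G,\alpha)$ of energy $\tfrac12$.

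For (3), I would run the same reduction at the level of the actions, which is precisely where the energy levels $k$ and $\tfrac12$ get matched. Writing the $\MP$ free-time action of energy $k$ as $\int\big(\tfrac12|\dot{\sigma}|_g^2+(k-U)\big)dt-\int\alpha(\dot{\sigma})\,dt$, the arithmetic--geometric mean inequality gives $\tfrac12|\dot{\sigma}|_g^2+(k-U)\ge\sqrt{2(k-U)}\,|\dot{\sigma}|_g=|\dot{\sigma}|_G$, with equality exactly on the energy shell $E=k$. Since $\int|\dot{\sigma}|_G\,dt=\ell_G(\gamma)$ and $\int\alpha(\dot{\sigma})\,dt=\int_\gamma\alpha$ are reparametrization-invariant, minimizing over all curves yields $\A(x,y)=\inf_\gamma\big(\ell_G(\gamma)-\int_\gamma\alpha\big)$. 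Applying the identical argument to the magnetic free-time action of energy $\tfrac12$, using $\tfrac12|\dot{\gamma}|_G^2+\tfrac12\ge|\dot{\gamma}|_G$ with equality on $\{|\dot{\gamma}|_G=1\}$, gives $\A_G(x,y)=\inf_\gamma\big(\ell_G(\gamma)-\int_\gamma\alpha\big)$. The two potentials therefore coincide on all of $M\times M$, in particular on $\partial M\times\partial M$.

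Finally, (2) follows from (1): because the correspondence preserves the \emph{unparametrized} trajectories, $\exp_x^{\MP}$ is a diffeomorphism onto $M$ if and only if the magnetic exponential map of $(G,\alpha)$ is, and the absence of conjugate points transfers verbatim. It remains to match the boundary conditions, i.e.\ to show that strict $\MP$-convexity of $\partial M$ at energy $k$ is equivalent to strict magnetic convexity of $\partial M$ for $(G,\alpha)$; I would do this by computing the conformal transformation of the second fundamental form $\Lambda$ and checking that the inequality $\Lambda(x,v)>(Y_x v,\nu)_g-d_xU(\nu)$ on $S^{k}(\partial M)$ becomes the corresponding magnetic-convexity inequality for $(G,\alpha)$ on the $G$-unit sphere. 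I expect the main obstacle to lie in this last step together with the conformal computation in (1): one must carefully track how $\nu$, $\Lambda$, and the splitting into normal and tangential components transform under the rescaling $G=cg$, where $c$ varies along $\partial M$, and confirm that the potential-dependent correction on the $\MP$ side is reproduced exactly by the conformal distortion of $\Lambda$.
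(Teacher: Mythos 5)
The paper does not actually prove this lemma: it is imported verbatim from \cite{az}*{Propositions 1, 2, 3}, so there is no in-paper argument to compare against. Your reconstruction via the Maupertuis--Jacobi correspondence is essentially the standard proof given in \cite{az}, and the parts you carry out in detail are correct: in (1) the identities $\nabla\varphi=-c^{-1}\nabla U$, $Y_G=c^{-1}Y$, the cancellation of the two $\nabla U$-terms, and the vanishing of the longitudinal term for $\lambda=c^{-1}$ (which is exactly the paper's reparametrization $ds/dt=2(k-U)$) all check out; in (3) the AM--GM reduction of both free-time actions to the same functional $\ell_G(\gamma)-\int_\gamma\alpha$ is precisely how the equality $\A=\A_G$ (in fact on all of $M\times M$) is obtained. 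The one place where your plan is off-target is the convexity step in (2): strict $\MP$-convexity is \emph{defined} geometrically, via the behaviour of trajectories tangent to $\partial M$, and since by (1) the $\MP$-geodesics of energy $k$ and the unit-speed $G$-magnetic geodesics have the same images as point sets, the equivalence of the two convexity conditions (and of the exponential-map conditions) is immediate---no conformal computation of $\Lambda$ is needed. Indeed, the inequality $\Lambda(x,v)>\langle Y_x(v),\nu\rangle-d_xU(\nu)$ is stated in the paper only as a \emph{consequence} of strict convexity, so matching the two second-fundamental-form inequalities would not by itself yield the equivalence of simplicity; the trajectory argument you already invoke for the exponential maps is both sufficient and simpler.
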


The definition of simple magnetic systems and the definition of the Ma\~n\'e's potential for magnetic systems, are the same as above with $U=0$.

The new parameter such that $\gamma(s)=\sigma(t(s))$ is a magnetic unit speed geodesic is given explicitly by
\[ s(t)=\int_{0}^{t}2(k-U(\sigma))dt. \]
Part (1) on Lemma \ref{lemma:basics_mp} is known as Jacobi--Maupertuis' principle (cf. \cite{am}*{Theorem 3.7.7}).

\section{Hamiltonians} \label{sec:hamiltonians}

In this section we study some relations that appear when one is working with different hamiltonians on $\MP$ and magnetic systems.

Associated to the simple $\MP$ system $(g,\alpha,U)$ on $M$, we have the Hamiltonian $H \colon T^{*}M \to \R$ and the energy $E \colon TM \to \R$ given by
\[ H(x,\xi)=\frac{1}{2}|\xi+\alpha|_{g(x)}^{2}+U(x), \qquad E(x,v)=\frac{1}{2}|v|_{g(x)}^{2}+U(x),\]
respectively. We will fix $k>\max_{x \in M}U$, and consider $\{H=k\} \subset T^{*}M$ and $S^{k}M=\{E=k\} \subset TM$. Knowing the values of the Ma\~n\'e's action restricted to the boundary on $\MP$-geodesics within that energy level, the question is if we can recover the triple $(g,\alpha,U)$ up to some gauge. We begin by considering a more general Hamiltonian $\tilde{H} \colon T^{*}M \to \R$ given by
\[ \tilde{H}(x,\xi)=\tilde{H}_{\mu,k}(x,\xi):=\frac{\mu(x)}{2}|\xi+\alpha|_{g(x)}^{2}+\mu(x)(U(x)-k)+k. \]
Here, $\mu$ is a smooth positive function on $M$, usually called \emph{elliptic factor}. We also consider the associated energy $\tilde{E} \colon TM \to \R$ given by
\[ \tilde{E}(x,v)=\tilde{E}_{\mu,k}(x,v)=\frac{1}{2\mu(x)}|v|_{g(x)}^{2}+\mu(x)(U(x)-k)+k. \]
Note that $\tilde{H}$ and $\Tilde{E}$ are related by the Legendre transform $(x,v) \mapsto (x,\frac{1}{\mu}v^{\flat}-\alpha)$ (see also Lemma \ref{lemma:mp_flow}). 
First, let us justify why we chose this Hamiltonian. The motivation is that we want to find a more general Hamiltonian with the same energy level of the original one, and with the same $\MP$-geodesics, up to reparametrization. To find it, note that the function $\Tilde{H}$ arises when given the condition $H=k$, one looks up for a Hamiltonian with the same level of energy. In that way, one could take the constant $k$ to the left-hand side, to obtain $H-k=0$. Motivated by the fact that multiplying a Hamiltonian by an elliptic factor only reparametrizes the zero level curves, we multiply by the elliptic factor $\mu$ on both sides to obtain $\mu(x)(H-k)=0$. Since we want the same energy level as before, we add $k$ to both sides, to obtain $\tilde{H}_{\mu,k}=\mu (H-k)+k=k$. 

On the other hand, from the equation $k=H$, one could take the potential from the right-hand side to the left-hand side, to obtain
\[ k-U(x)=\frac{1}{2}|\xi+\alpha|_{g(x)}^{2}. \]
From this we obtain
\[ 1=\frac{1}{2(k-U(x))}|\xi+\alpha|_{g(x)}^{2}. \]
Since we want the same energy level as before, we need to multiply both sides by $k$. Hence, we find
\[ k=\frac{k}{2(k-U(x))}|\xi+\alpha|_{g(x)}^{2}=:\hat{H}(x,\xi). \]
The problem with this last Hamiltonian is that we lose the generality of the elliptic factor that we had before. Furthermore, we have that $\hat{H}_{k}=\tilde{H}_{\mu,k}$ with $\mu=k/(k-U)$, and $\{\hat{H}_{k}=k\}=\{\tilde{H}_{\mu,k}=k\}$. Hence, the Hamiltonian $\Tilde{H}_{\mu,k}$ is more general that $\hat{H}_{k}$.

On the other hand, one could ask to use the Hamiltonian of the reduced magnetic system $(2(k-U)g,\alpha)$ associated to the $\MP$-system $(g,\alpha,U)$. The Hamiltonian that corresponds to the system $(2(k-U)g,U)$ is
\[ H_{r}:=\frac{1}{2(k-U)} \frac{|\xi+\alpha|_{g}^{2}}{2}. \]
In \cite{az}, the authors work with this Hamiltonian on $\{H_{r}=1/2\}$. Observe that if we take $\mu=1/(2(k-U))$, we obtain $\{H_{r}=1/2\}=\{\tilde{H}_{\mu,k}=k\}$, and $H_{r}=\tilde{H}_{\mu,k}-(k-1/2)$. 

So far, we have that $\tilde{H}_{\mu,k}$ is a generalization of $H$, $\hat{H}_{k}$, and $H_{r}$. The following table summarizes the information up to this point.
\begin{center}
    \begin{tabular}{|c|c|c|c|} \hline
  $\Tilde{H}_{\mu,k}$ & $H=\Tilde{H}_{1,k}$ & $\hat{H}_{k}=\tilde{H}_{\frac{k}{k-U},k}$ & $H_{r}=\tilde{H}_{\frac{1}{2(k-U)},k}-\left(k-\frac{1}{2}\right)$\\\hline
$\frac{1}{\mu}g$ & $g$ & $\frac{k-U}{k}g$ & $2(k-U)g$ \\ \hline
$\alpha$ & $\alpha$ & $\alpha$ & $\alpha$ \\ \hline
$\mu(U-k)+k$ & $U$ & $0$ & $0$ \\ \hline
$k$ & $k$ & $k$ & $\frac{1}{2}$ \\ \hline
\end{tabular}
\end{center}
Here, in the first row we write the Hamiltonians, in the second row we write the metric, in the third row the magnetic field, in the fourth row the potential, and in the last row the energy level.

\begin{rmk} \label{rmk:red}
As we mentioned above, the authors in \cite{az} reduce the $\MP$-system $(g,\alpha,U)$ (with fixed energy level $k$) to the magnetic system $(G,\alpha)$, where $G=2(k-U)g$. If we do the same procedure whit the $\MP$-system $(g/\mu,\alpha,\mu(U-k)+k)$, we obtain  again the magnetic system $(G,\alpha)$.    
\end{rmk}

Before focusing on the study of $\tilde{H}$, we note the following relation between Hamiltonian curves on $\{H=k\}$ and $\MP$-geodesics on $\{E=k\}$. 

\begin{lemma} \label{lemma:ham_eqs_are_MP_geo}
Let $(g,\alpha,U)$ be an $\MP$-system on the simple manifold $M$. Let $H$ be the associated Hamiltonian. If $(x,\xi) \in \{H=k\}$ is a Hamiltonian curve for $H$, then $\dot{x}$ is $\MP$-geodesic on and $(x,\dot{x}) \in \{E=k\}$.
\end{lemma}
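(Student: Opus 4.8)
The plan is to compute Hamilton's equations for $H(x,\xi)=\frac{1}{2}|\xi+\alpha|_{g(x)}^{2}+U(x)$ in local coordinates and show that the projected base curve $x(t)$ satisfies the $\MP$-geodesic equation \eqref{eq:mp-geo}, while the fiber momentum $\xi$ is exactly the Legendre image of the velocity $\dot{x}$. First I would apply the Legendre transform associated to $H$. Writing $H(x,\xi)=\frac{1}{2}g^{ij}(\xi_i+\alpha_i)(\xi_j+\alpha_j)+U$, Hamilton's first equation $\dot{x}^i=\partial H/\partial \xi_i=g^{ij}(\xi_j+\alpha_j)$ gives $\dot{x}=(\xi+\alpha)^{\sharp}$, equivalently $\xi=\dot{x}^{\flat}-\alpha$. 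Substituting back into $H$ immediately yields $H=\frac{1}{2}|\dot{x}|_g^2+U=E(x,\dot{x})$, so the constraint $H=k$ transfers to $(x,\dot{x})\in\{E=k\}=S^kM$, establishing the energy-level claim.

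The main content is the equation of motion. I would next compute the second Hamilton equation $\dot{\xi}_i=-\partial H/\partial x^i$, which involves the derivatives of $g^{ij}$, of $\alpha$, and of $U$. Differentiating the relation $\xi=\dot{x}^{\flat}-\alpha$ along the curve and combining it with this second equation, the terms coming from $\partial_x g^{ij}$ should reorganize into the Christoffel symbols, producing $\nabla_{\dot{x}}\dot{x}$ on the left. The terms involving $\alpha$ should combine into the contraction $d\alpha(\dot{x},\cdot)^{\sharp}=\Omega(\dot{x},\cdot)^{\sharp}$; by the defining relation $\Omega_x(u,v)=(Y_x u,v)_g$ this is precisely $Y(\dot{x})$. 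Finally the terms from $\partial_x U$ assemble into $-\nabla U$. Collecting everything gives $\nabla_{\dot{x}}\dot{x}=Y(\dot{x})-\nabla U$, which is exactly \eqref{eq:mp-geo}, so $x$ is an $\MP$-geodesic.

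The step I expect to be the main obstacle is the careful bookkeeping of the $\alpha$-terms: one must verify that the antisymmetric part of $\partial_i\alpha_j-\partial_j\alpha_i$ is what survives and correctly matches $d\alpha=\Omega$, since the symmetric combinations of $\partial\alpha$ cancel against the terms generated when differentiating $\dot{x}^{\flat}=g_{ij}\dot{x}^j$. This is a standard but slightly delicate computation, essentially the derivation of the Lorentz force law in coordinates; a clean way to organize it is to recognize $H$ as the magnetic-plus-potential Hamiltonian and recall that the magnetic term $\frac{1}{2}|\xi+\alpha|_g^2$ is the standard Hamiltonian whose flow is the magnetic geodesic flow with Lorentz force $Y$, so that adding $U$ only contributes the extra gradient term $-\nabla U$ via Newton's equation. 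I would therefore either carry out the direct coordinate computation or, more economically, cite the known correspondence for magnetic Hamiltonians and treat $U$ as a perturbation, checking that the potential contributes exactly $-\partial H/\partial x^i|_U=-\partial_i U$, i.e. $-\nabla U$, to the force.
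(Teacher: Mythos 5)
Your proposal is correct and follows essentially the same route as the paper: the energy claim is proved by exactly your substitution $\xi=\dot{x}^{\flat}-\alpha$ into $H=k$, while the equation-of-motion part is delegated to Lemma \ref{lemma:mp_flow}(4) in the appendix, whose proof is precisely the coordinate computation you outline (Hamilton's equations for $H$, the change of variable $\eta=\xi+\alpha$, and the reassembly of the $\partial g$-terms into Christoffel symbols, the antisymmetric $\partial\alpha$-terms into the Lorentz force $Y(\dot{x})$, and the potential term into $-\nabla U$). The only difference is presentational: you carry out the full computation in one place, whereas the paper splits it between the lemma and the appendix.
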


\begin{proof}
This is part of Lemma \ref{lemma:mp_flow}. We only prove the part of the energy. Let $(x(s),\xi(s))$ be a Hamiltonian curve of $H$ on the energy level $\{H=k\}$. From the Hamilton equations we have $\dot{x}^{k}=g^{ik}(\xi_{i}+\alpha_{i})$. Then, 
\begin{align*}
    k&=H(x(s),\xi(s)) \\
    &=\frac{1}{2}|\xi_{i}+\alpha_{i}|^{2}+U \\
    &=\frac{1}{2}g^{ij} (g_{ik}\dot{x}^{k}-\alpha_{i}+\alpha_{i})(g_{j\ell}\dot{x}^{\ell}-\alpha_{\ell}+\alpha_{\ell})+U(x) \\
    &=\frac{1}{2}g^{ij}g_{ik}g_{j\ell}\dot{x}^{k}\dot{x}^{\ell}+U(x) \\
    &=\frac{1}{2}g_{ik}\dot{x}^{k}\dot{x}^{i}+U(x) \\
    &=E(x(s),\dot{x}(s)).
\end{align*}
This finishes the proof.
\end{proof}

See Lemma \ref{lemma:mp_flow} for several interpretations of the $\MP$-flow from a symplectic point of view.

The following result is motivated (and follows) by the fact that multiplying a Hamiltonian by an elliptic factor does not change the Hamiltonian curves (up to reparanetrization) at zero energy level. It gives a relation between Hamiltonian curves for $\tilde{H}=\tilde{H}_{\mu,k}$ and $H$. The proof is based on \cite{plamen}*{Section~3.2}, see also \cite{losu}.

\begin{prop}
Let $(\tilde{x}(s),\tilde{\xi}(s))$ be a Hamiltonian curve of $\tilde{H}$ on the energy level $\{\tilde{H}=k\}$. Then, $(\tilde{x}(s),{\tilde{\xi}}(s))$ is a reparametrization of a Hamiltonian curve for $H$ on the energy level $\{H=k\}$.    
\end{prop}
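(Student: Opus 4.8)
The plan is to exploit the algebraic identity, already recorded in the text, that $\tilde{H}_{\mu,k} = \mu(H - k) + k$. The first step is to identify the two energy surfaces. Since $\mu > 0$, the equation $\tilde{H}_{\mu,k} = k$ is equivalent to $\mu(H-k) = 0$, hence to $H = k$; thus $\{\tilde{H} = k\} = \{H = k\}$ as subsets of $T^{*}M$. In particular, any Hamiltonian curve of $\tilde{H}$ starting on $\{\tilde{H} = k\}$ stays on that set, because $\tilde{H}$ is constant along its own flow, and this set is exactly the energy level $\{H = k\}$ on which we want to land.

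The second step is to compare the two Hamiltonian vector fields on this common level set. Writing Hamilton's equations for $\tilde{H} = \mu(H-k) + k$ in canonical coordinates $(x, \xi)$, and using that $\mu = \mu(x)$ is independent of $\xi$, I would compute
\[
\dot{\tilde{x}}^{i} = \frac{\partial \tilde{H}}{\partial \xi_{i}} = \mu \frac{\partial H}{\partial \xi_{i}}, \qquad
\dot{\tilde{\xi}}_{i} = -\frac{\partial \tilde{H}}{\partial x^{i}} = -(H-k)\frac{\partial \mu}{\partial x^{i}} - \mu \frac{\partial H}{\partial x^{i}}.
\]
Restricting to $\{\tilde{H} = k\} = \{H = k\}$, where $H - k \equiv 0$, the first term in the $\tilde{\xi}$-equation drops out, and the system becomes $\dot{\tilde{x}}^{i} = \mu\, \partial_{\xi_{i}} H$, $\dot{\tilde{\xi}}_{i} = -\mu\, \partial_{x^{i}} H$. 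Equivalently, in intrinsic terms, the Leibniz rule $X_{fh} = f X_{h} + h X_{f}$ gives $X_{\tilde{H}} = \mu X_{H-k} + (H-k) X_{\mu} = \mu X_{H}$ on the common level set, using the vanishing of $H - k$ there and $X_{H-k} = X_{H}$. Thus $X_{\tilde{H}} = \mu X_{H}$ at every point of $\{H = k\}$.

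The final step is to turn the relation $X_{\tilde{H}} = \mu X_{H}$ into an explicit reparametrization. Given a Hamiltonian curve $(\tilde{x}(s), \tilde{\xi}(s))$ of $\tilde{H}$ on $\{\tilde{H} = k\}$, I would define the new time $t(s) = \int_{0}^{s} \mu(\tilde{x}(r))\, dr$; since $\mu > 0$, this is a strictly increasing smooth change of variable, with inverse $s(t)$. A direct chain-rule computation then gives that $\beta(t) := (\tilde{x}(s(t)), \tilde{\xi}(s(t)))$ satisfies $\dot{\beta}(t) = \frac{ds}{dt}\, X_{\tilde{H}}(\beta) = \frac{1}{\mu} \cdot \mu\, X_{H}(\beta) = X_{H}(\beta)$, so $\beta$ is a Hamiltonian curve for $H$; and since the image of $\beta$ lies in $\{\tilde{H}=k\} = \{H=k\}$, it lies on the correct energy level. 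This exhibits $(\tilde{x}(s), \tilde{\xi}(s))$ as a reparametrization of the $H$-Hamiltonian curve $\beta$.

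There is no serious obstacle here: the only point requiring care is that the cross-term $(H-k)X_{\mu}$ (equivalently the $\partial_{x^{i}}\mu$ term) must be killed before the identification $X_{\tilde{H}} = \mu X_{H}$ holds, and this is precisely where the restriction to $\{H = k\}$ and the positivity of $\mu$, guaranteeing a genuine orientation-preserving reparametrization, enter. Off the level set the two flows need not agree even up to reparametrization, so the statement is genuinely a level-set phenomenon.
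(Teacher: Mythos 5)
Your proof is correct and follows essentially the same route as the paper: write Hamilton's equations for $\tilde{H}=\mu(H-k)+k$, observe that the $(H-k)\,\partial_{x}\mu$ term vanishes on the common level set $\{\tilde{H}=k\}=\{H=k\}$ so that $X_{\tilde{H}}=\mu X_{H}$ there, and absorb the factor $\mu$ by a monotone time change. The only cosmetic difference is directional — you reparametrize the $\tilde{H}$-curve directly into an $H$-curve, whereas the paper reparametrizes an $H$-curve and invokes ODE uniqueness to identify it with the given $\tilde{H}$-curve — but the computation is the same.
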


\begin{proof}
Because of the form of $\tilde{H}=\Tilde{H}_{\mu,k}$, the curve $(\tilde{x}(s),\tilde{\xi}(s))$ satisfy the following equations 
\begin{align*}
    \dot{\tilde{x}}^{k}&=\mu(\tilde{x}) \partial_{\tilde{\xi}_{k}}(H(\tilde{x},\tilde{\xi})), \\
\dot{\tilde{\xi}}_{k}&=-(\partial_{\tilde{x}^{k}}\mu(\tilde{x})) (H(\tilde{x},\tilde{\xi})-k)-\mu(\tilde{x}) \partial_{\tilde{x}^{k}}(H(\tilde{x},\tilde{\xi})).
\end{align*}
Since $(\tilde{x},\tilde{\xi}) \in \{H=k\}=\{\tilde{H}=k\}$, then the equations become
\begin{align*}
   \dot{\tilde{x}}^{k}&=\mu(\tilde{x}) \partial_{\tilde{\xi}_{k}}(H(\tilde{x},\tilde{\xi})), \\
\dot{\tilde{\xi}}_{k}&=-\mu(\tilde{x}) \partial_{\tilde{x}^{k}}(H(\tilde{x},\tilde{\xi})). 
\end{align*}
Let $(x(s),\xi(s))$ be a Hamiltonian curve of $H$ on the energy level $\{H=k\}$. Then, $(x(s),\xi(s))$ solves
\begin{align*}
\dot{x}^{k}&=\partial_{\xi_{k}}(H(x,\xi)), \\
\dot{\xi}_{k}&=- \partial_{x^{k}}(H(x,\xi)).    
\end{align*}
Set $z:=(x(0),\xi(0))$. Let $\beta$ to solve 
\[ \dot{\beta}(s)=\mu(x(s)), \quad \beta(0)=0. \]
Then, $(x,\xi) \circ \beta(s)$ satisfy the same equations as $(\tilde{x}(s),\tilde{\xi}(s))$. By uniqueness of solutions for an ODE, we find $(\tilde{x}(s),\tilde{\xi}(s))=(x,\xi) \circ \beta(s)$.

To see that $(x,\xi) \in \{H=k\}$, observe that since $(x,\xi) \circ \beta(s)=(\tilde{x}(s),\tilde{\xi}(s)) \in \{\tilde{H}=k \}$, then 
\[ k=\tilde{H}(\tilde{x}(s),\tilde{\xi}(s))=\frac{\mu(\tilde{x})}{2}|\tilde{\xi}(s)+\alpha(\tilde{x})|_{g(\tilde{x})}^{2}+\mu(\tilde{x})(U(\tilde{x})-k)+k. \]
Since $\mu>0$ , we obtain
\begin{equation} \label{eq:repara}
    k=\frac{1}{2}|\tilde{\xi}(s)+\alpha(\tilde{x})|_{g(\tilde{x})}^{2}+U(\tilde{x}).
\end{equation}
Note that since $\mu>0$, then $\dot{\beta}>0$. So, $\beta$ is increasing and has an inverse function $\beta^{-1}$. Thus, if we compose \eqref{eq:repara} with $\beta^{-1}$, we find $H(x(s),\xi(s))=k$.
\end{proof}

\section{The New Gauge Equivalence} \label{sec:defin}

In this section we define a more general version of gauge rigidity than the one presented in \cite{az}. Under this notion, we are able to generalize theorems about boundary rigidity presented on that work in Section \ref{sec:thms}.

Recall that associated to the $\MP$-system $(g,\alpha,U)$ (with fixed energy $k$), we have to the magnetic system $(2(k-U)G,\alpha)$ (of energy $\frac{1}{2}$). A natural question is the following: are there other $\MP$-systems that reduce to the same magnetic system? The answer to this question is affirmative. Indeed, the system $(\tilde{g},\tilde{\alpha},\tilde{U}):=(g/\mu,\alpha,\mu(U-k)+k)$ (with fixed energy $k$) has the same reduction. Indeed,
\[ 2(k-\tilde{U})\tilde{g}=2(k- [\mu(U-k)+k])\frac{g}{\mu}=2(k-U)g. \]

As was noticed in Section \ref{sec:hamiltonians}, another motivation to consider systems of this form is because of the Hamiltonians. Recall that we can ``recover'' the Hamiltonian of the reduction $H_{r}$ by looking at a particular case of $\tilde{H}=\tilde{H}_{\mu,k}$.

Finally, we would like to mention that in \cite{az}*{Section 6}, there is a comment about a possible ``weak'' notion of gauge equivalence, in which the authors suggest the a similar notion that they used, up to the conformal factor $(k-U')^{-1}(k-f^{*}U)$.

Motivated by these facts, we propose the following definition.

\begin{defin} \label{defin:gauge}
We say that two $\mathcal{M P}$-systems $(g, \alpha, U)$ and $(g', \alpha', U')$ are $k$-\emph{gauge equivalent} if there is a diffeomorphism $f\colon M \to M$ with $f|_{\partial M}=id_{\partial M}$, a smooth function $\varphi \colon M \to \R$ with $\varphi|_{\partial M}=0$, and a strictly positive function $\mu \in C^{\infty}(M)$, such that $g'=\frac{1}{\mu}f^{*}g$, $\alpha'=f^* \alpha+d \varphi$ and $U'=\mu (f^{*}U-k)+k$.
\end{defin}

\begin{rmk} \hfill
\begin{enumerate}
    \item Take $\mu\equiv 1$ in the previous definition to obtain the notion of gauge equivalence given in \cite{az}. 
    \item This definition allows us to obtain a magnetic system (i.e., $U=0$) be equivalent to an $\MP$-system, because the new potential should be $k(1-\mu)$. To obtain the usual definition of gauge equivalence for magnetic systems (see \cite{dpsu}), we should impose $\mu \equiv 1$ and $U=U'=0$.
    \item The level of energy fixed at the beginning of the definition is used to ensure that we obtain a group of gauge transformations. See below for the proof of this fact.
    \item Note that $k>\max_{x \in M}U$ if and only if $k>\max_{x \in M} [\mu(f^{*}U-k)+k]$. This is crucial when working with the actions as was mentioned above, see \cite{az}.
    \item Two $k$-gauge equivalent have the same boundary action function of energy $k$ and something similar holds for the scattering relation, see Lemma \ref{lemma:mp_data}.
\end{enumerate}
\end{rmk}

Now we show that the set of transformations that we just defined is a group. Consider an $\MP$-system $(g,\alpha,U)$. Then $(\frac{1}{\mu_{1}}f_{1}^{*}g,f_{1}^{*}\alpha+d\varphi_{1},\mu_{1} (f_{1}^{*}U-k)+k)$ is $k$-gauge equivalent to $(g,\alpha,U)$, where $f_{1}\colon M \to M$ is a diffeomorphism with $f_{1}|_{\partial M}=id_{\partial M}$, $\varphi_{1} \colon M \to \R$ is a smooth function with $\varphi_{1}|_{\partial M}=0$, and $\mu_{1} \in C^{\infty}(M)$ is a strictly positive function. Take another diffeomorphism $f_{2} \colon M \to M$ with $f_{2}|_{\partial M}=id_{\partial M}$, another smooth function $\varphi_{2} \colon M\to \R$ with $\varphi_{2}|_{\partial M}=0$, another strictly positive function $\mu_{2} \in C^{\infty}(M)$, and apply it to the new system to obtain the following metric, 1-form, and potential
\[ 
\frac{1}{\mu_{2} \cdot f_{2}^{*} \mu_{1}} (f_{1} \circ f_{2})^{*}g, \qquad (f_{1} \circ f_{2})^{*}\alpha+d(f_{2}^{*}\varphi_{1}+\varphi_{2}), \qquad (\mu_{2} \cdot f_{2}^{*}\mu_{1}) ((f_{2} \circ f_{1})^{*}U-k )+k.  
\]
So, we obtain again a system that is $k$-gauge equivalent to the initial one. Indeed, the diffeomorphism is $f_{1} \circ f_{2}$ and satisfies $f_{1} \circ f_{2}|_{\partial M}=id_{\partial M}$ because each of the $f_{i}$'s satisfies this condition. The smooth function is $f_{2}^{*}\varphi_{1}+\varphi_{1}$, which vanishes on the boundary because each $\varphi_{i}$ vanishes there, and $f_{2}|_{\partial M}=id_{\partial M}$. Finally, the elliptic factor is $\mu_{2} \cdot f_{2}^{*}\mu_{1}$, which is strictly positive because each $\mu_{i}$ is strictly positive as well.

To show that $k$-gauge equivalent have the same data, we need a preliminary result, which will also be used in the proof of the rigidity theorems in Section \ref{sec:thms}.

\begin{lemma} \label{lemma:mp_red}
Let $(g,\alpha,U)$ and $(g',\alpha',U')$ be two $k$-gauge equivalent $\MP$-system. Then, their reduced magnetic systems are magnetic gauge equivalent at energy $\frac{1}{2}$. Reciprocally, if $(G,\alpha)$ and $(G,\alpha')$ are gauge equivalent magnetic systems (of energy $\frac{1}{2}$) given by the reduction of the $\MP$-systems $(g,\alpha,U)$ and $(g',\alpha',U')$, then the $\MP$-systems are $k$-gauge equivalent.
\end{lemma}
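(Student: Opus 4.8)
The plan is to prove both directions by unwinding the definitions of the two gauge equivalences and verifying that they match under the reduction $(g,\alpha,U)\mapsto(2(k-U)g,\alpha)$. Recall that the magnetic gauge equivalence from \cite{dpsu} at energy $\frac12$ says that $(G,\alpha)$ and $(G',\alpha')$ are equivalent if there is a diffeomorphism $f\colon M\to M$ with $f|_{\partial M}=id_{\partial M}$ and a smooth function $\varphi\colon M\to\R$ with $\varphi|_{\partial M}=0$ such that $G'=f^{*}G$ and $\alpha'=f^{*}\alpha+d\varphi$. The key observation, already displayed in the excerpt, is that the $\MP$-data $(\tilde g,\tilde\alpha,\tilde U)=(g/\mu,\alpha,\mu(U-k)+k)$ reduces to the \emph{same} magnetic metric $2(k-U)g$; the extra elliptic factor $\mu$ is exactly the degree of freedom that disappears upon reduction, while the diffeomorphism $f$ and the exact shift $d\varphi$ survive unchanged.

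For the forward direction I would start from the hypothesis that $(g',\alpha',U')$ is $k$-gauge equivalent to $(g,\alpha,U)$, so $g'=\frac1\mu f^{*}g$, $\alpha'=f^{*}\alpha+d\varphi$, and $U'=\mu(f^{*}U-k)+k$, with $f,\varphi,\mu$ as in Definition \ref{defin:gauge}. I would then compute the reduced metric $G'=2(k-U')g'$ directly:
\begin{equation*}
  2(k-U')g' = 2\bigl(k-[\mu(f^{*}U-k)+k]\bigr)\frac{1}{\mu}f^{*}g = 2\mu(k-f^{*}U)\frac{1}{\mu}f^{*}g = f^{*}\bigl(2(k-U)g\bigr)=f^{*}G,
\end{equation*}
using that $f^{*}$ commutes with multiplication by the pulled-back scalar $2(k-f^{*}U)$. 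Since $\alpha'=f^{*}\alpha+d\varphi$ is already in the required form, the pair $(f,\varphi)$ witnesses the magnetic gauge equivalence of $G'$ and $G$ at energy $\frac12$, and the boundary and convexity conditions $f|_{\partial M}=id$, $\varphi|_{\partial M}=0$ carry over verbatim.

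For the converse I would assume $(G,\alpha)$ and $(G',\alpha')$ are magnetic gauge equivalent, i.e.\ $G'=f^{*}G$ and $\alpha'=f^{*}\alpha+d\varphi$, where $G=2(k-U)g$ and $G'=2(k-U')g'$ arise as the reductions of the given $\MP$-systems. The task is to reconstruct the elliptic factor $\mu$ from this magnetic data. Defining $\mu:=\dfrac{k-f^{*}U}{k-U'}$, which is smooth and strictly positive because $k>\max U$ is preserved under the gauge (Remark following Definition \ref{defin:gauge}, item (4)), I would check that $g'=\frac1\mu f^{*}g$ and $U'=\mu(f^{*}U-k)+k$ hold. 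The identity $G'=f^{*}G$ reads $2(k-U')g'=f^{*}(2(k-U)g)=2(k-f^{*}U)f^{*}g$, so $g'=\frac{k-f^{*}U}{k-U'}f^{*}g=\frac1\mu f^{*}g$, and rearranging the definition of $\mu$ gives $k-U'=\frac1\mu(k-f^{*}U)$, i.e.\ $U'=\mu(f^{*}U-k)+k$, as required; together with $\alpha'=f^{*}\alpha+d\varphi$ this is precisely $k$-gauge equivalence.

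The main subtlety—and where I would be most careful—is the converse: one must verify that the $\mu$ read off from the two metrics $g,g'$ is genuinely consistent with the potential relation, rather than being an extra unconstrained choice. This works because the reduced magnetic metric rigidly couples the conformal factor to the potential, so the single scalar $2(k-U)$ encodes both; the positivity of $\mu$ and the invariance of the condition $k>\max U$ are the points that must be checked to stay inside the space of admissible $\MP$-systems. I would also note in passing that $f|_{\partial M}=id$ forces $U'|_{\partial M}=U|_{\partial M}$ and $\mu|_{\partial M}$ to be determined, but since the definition imposes no boundary condition on $\mu$ this causes no difficulty.
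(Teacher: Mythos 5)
Your argument is correct and follows essentially the same route as the paper: reduce, observe that the pair $(f,\varphi)$ witnesses the magnetic gauge equivalence of the reduced systems, and in the converse read off the elliptic factor from the identity $2(k-U')g'=2(k-f^{*}U)f^{*}g$. The only slip is that your $\mu=\frac{k-f^{*}U}{k-U'}$ is the reciprocal of what Definition \ref{defin:gauge} requires --- with your convention the computation you display actually gives $g'=\mu f^{*}g$ and $U'=\frac{1}{\mu}(f^{*}U-k)+k$ --- so take $\mu=\frac{k-U'}{k-f^{*}U}$, as the paper does, and everything matches.
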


\begin{proof}
The reduced magnetic systems are 
\[ (2(k-U)g,\alpha), \quad (2(k-f^{*}U)f^{*}g,f^{*}\alpha+d\varphi), \]
where $f \colon M \to M$ is a diffeomorphism which is the identity on the boundary and $\varphi \in C^{\infty}(M)$ with $\varphi|_{\partial M}=0$. These systems are magnetic gauge equivalent by definition. To prove the condition about the energy, note that if $\sigma_{1}$ is an $\MP$-geodesic for $(g,\alpha,U)$ and $\sigma_{2}$ is an $\MP$-geodesic for $(g',\alpha',U')$, then by Lemma \ref{lemma:basics_mp} we have that their reparametrizations $\gamma_{1}(s_{1})=\sigma(t_{1}(s_{1}))$ and $\gamma_{2}(s_{2})=\sigma(t_{2}(s_{2}))$ are magnetic geodesics of the corresponding reduced magnetic systems. Then, 
\[2(k-U) \frac{g_{ij}}{2} \frac{d\gamma_{1}^{i}}{ds_{1}}\frac{d\gamma_{1}^{j}}{ds_{1}}=2(k-U) \frac{g_{ij}}{2} \frac{d\sigma_{1}^{i}}{dt_{1}}\frac{d\sigma_{1}^{j}}{dt_{1}} \left(\frac{dt_{1}}{ds_{1}}\right)^{2}=2(k-U)^{2}\left(\frac{dt_{1}}{ds_{1}}\right)^{2}=\frac{1}{2}.\]
Similarly, 
\begin{align*}
    2(k-f^{*}U) \frac{(f^{*}g)_{ij}}{2} \frac{d\gamma_{2}^{i}}{ds_{2}}\frac{d\gamma_{2}^{j}}{ds_{2}}&=2(k-U^{*}f) \frac{(f^{*}g)_{ij}}{2} \frac{d\sigma_{2}^{i}}{dt_{2}}\frac{d\sigma_{2}^{j}}{dt_{2}} \left(\frac{dt_{2}}{ds_{2}}\right)^{2} \\
    &=2\mu^{2}(k-f^{*}U)^{2}\left(\frac{dt_{2}}{ds_{2}}\right)^{2} \\
    &=\frac{1}{2}.
\end{align*}

To prove the second part, recall that if $(G,\alpha)$ and $(G',\alpha')$ are gauge equivalent, then there exists a diffeomorphism $f \colon M \to M$ fixing the boundary and a smooth function $\varphi$ vanishing in the boundary so that $G'=f^{*}G$ and $\alpha'=f^{*}\alpha+d\varphi$. Note that the condition about the metric imply
\[ 2(k-U')g'=G'=f^{*}G=2(k-f^{*}U)f^{*}g. \]
Define $\mu=\frac{k-U'}{k-f^{*}U}$. Then 
\[ g'=\frac{1}{\mu}f^{*}g, \qquad\alpha'=f^{*}\alpha+d\varphi, \qquad U'=\mu(f^{*}U-k)+k, \]
which shows that $(g,\alpha,U)$ and $(g',\alpha',U')$ are $k$-gauge equivalent.
\end{proof}

Using this, we show that two $k$-gauge equivalent $\MP$-systems have the same data.

\begin{lemma} \label{lemma:mp_data}
Let $(g,\alpha,U)$ and $(g',\alpha',U')$ be two $k$-gauge equivalent $\MP$-systems. Then they boundary action functions at energy level $k$ coincide. Furthermore, if $g|_{\partial M}=g'|_{\partial M}$, $U|_{\partial M}=U'|_{\partial M}$ and $i^{*}\alpha=i^{*}\alpha'$, where $i \colon \partial M \to M$ is the embedding map, then the scattering relations at energy level $k$ coincide.
\end{lemma}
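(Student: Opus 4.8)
The plan is to transfer both assertions to the reduced magnetic systems, where the analogous statements are available from \cite{dpsu}, and then pull the conclusions back to the $\MP$-systems using the Jacobi--Maupertuis reparametrization of Lemma \ref{lemma:basics_mp}(1). By Lemma \ref{lemma:mp_red} the reductions $(G,\alpha)=(2(k-U)g,\alpha)$ and $(G',\alpha')=(2(k-U')g',\alpha')$ are magnetic gauge equivalent at energy $\tfrac12$, so $G'=f^{*}G$ and $\alpha'=f^{*}\alpha+d\varphi$ for a boundary-fixing diffeomorphism $f$ and a $\varphi$ vanishing on $\partial M$; this is the backbone of the whole argument.

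For the boundary action functions I would argue as follows. Since $f$ restricts to the identity on $\partial M$ and $\varphi$ vanishes there, the magnetic boundary action functions coincide, $\A_{G}|_{\partial M\times\partial M}=\A_{G'}|_{\partial M\times\partial M}$ (this is the gauge invariance of the boundary action recorded in \cite{dpsu}). Combining with Lemma \ref{lemma:basics_mp}(3) applied to each system yields
\[
\A_{g,\alpha,U}|_{\partial M\times\partial M}=\A_{G}|_{\partial M\times\partial M}=\A_{G'}|_{\partial M\times\partial M}=\A_{g',\alpha',U'}|_{\partial M\times\partial M},
\]
which is the first claim. This part is essentially immediate once the two cited lemmas are in hand.

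For the scattering relations I would introduce the fiberwise rescaling $\Psi\colon S^{k}M\to S^{1/2}_{G}M$, $\Psi(x,v)=(x,\tfrac{v}{2(k-U(x))})$, and its primed analogue $\Psi'(x,v)=(x,\tfrac{v}{2(k-U'(x))})$. By Lemma \ref{lemma:basics_mp}(1) and the explicit change of parameter $s(t)=\int_{0}^{t}2(k-U(\sigma))\,dt$, the map $\Psi$ sends the energy-$k$ $\MP$-geodesic issued from $(x,v)$ to the unit-speed magnetic geodesic of $(G,\alpha)$ with the same trace and endpoints. Since $2(k-U)>0$, $\Psi$ is a fiberwise diffeomorphism preserving the sign of $(\cdot,\nu)$, hence $\Psi(\partial_{\pm}S^{k}M)=\partial_{\pm}S^{1/2}_{G}M$, and because the two parametrizations share the same curve with velocities rescaled by $2(k-U)$ at each point, the relations are conjugate, $\mathcal{S}=\Psi^{-1}\circ\mathcal{S}_{G}\circ\Psi$ and likewise $\mathcal{S}'=(\Psi')^{-1}\circ\mathcal{S}_{G'}\circ\Psi'$. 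I would then check that the boundary hypotheses transfer: $g|_{\partial M}=g'|_{\partial M}$ and $U|_{\partial M}=U'|_{\partial M}$ give $G|_{\partial M}=2(k-U)g|_{\partial M}=2(k-U')g'|_{\partial M}=G'|_{\partial M}$, while $i^{*}\alpha=i^{*}\alpha'$ is assumed; these are exactly the boundary-matching conditions under which \cite{dpsu} guarantees $\mathcal{S}_{G}=\mathcal{S}_{G'}$. Since $\Psi$ and $\Psi'$ depend only on $U|_{\partial M}$ and on the common boundary sphere bundle determined by $g|_{\partial M}$ and $U|_{\partial M}$, they agree on $\partial_{\pm}S^{k}M$, and substituting gives $\mathcal{S}=\Psi^{-1}\circ\mathcal{S}_{G}\circ\Psi=(\Psi')^{-1}\circ\mathcal{S}_{G'}\circ\Psi'=\mathcal{S}'$.

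I expect the main obstacle to be the reparametrization bookkeeping in the third paragraph: rigorously establishing that $\Psi$ intertwines the two scattering relations, in particular that it respects the splitting $\partial_{\pm}$ and correctly normalizes the boundary velocities, and confirming that the assumed matching conditions translate precisely into the boundary hypotheses required by the magnetic scattering result of \cite{dpsu}. Everything else reduces to the two reduction lemmas and the cited invariance of the magnetic boundary data.
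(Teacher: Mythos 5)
Your first claim is proved exactly as in the paper: reduce via Lemma \ref{lemma:mp_red}, invoke the gauge invariance of the magnetic boundary action function from \cite{dpsu}, and transfer back with Lemma \ref{lemma:basics_mp}(3). For the second claim you take a genuinely different route. The paper disposes of it in one line by combining the first claim with \cite{az}*{Theorem 4.3}, which says that for simple $\MP$-systems with matching boundary data the boundary action function already determines the scattering relation; so equality of the actions plus the hypotheses $g|_{\partial M}=g'|_{\partial M}$, $U|_{\partial M}=U'|_{\partial M}$, $i^{*}\alpha=i^{*}\alpha'$ immediately gives equality of the scattering relations. You instead argue directly at the level of the flows: the fiberwise rescaling $\Psi(x,v)=(x,v/(2(k-U(x))))$ is the correct intertwiner (on $S^{k}M$ one has $|v|_{g}^{2}=2(k-U)$, so $\Psi$ lands in the $G$-unit sphere bundle, preserves the sign of the pairing with the normal, and matches the Jacobi--Maupertuis reparametrization), which gives $\mathcal{S}=\Psi^{-1}\circ\mathcal{S}_{G}\circ\Psi$; the boundary hypotheses make $\Psi=\Psi'$ on $\partial_{\pm}S^{k}M$ and reduce everything to $\mathcal{S}_{G}=\mathcal{S}_{G'}$ for gauge-equivalent magnetic systems. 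That last step is where the weight of your argument sits: it is the magnetic analogue of what you are trying to prove, and it needs the boundary matching precisely so that $df_{x}=\mathrm{id}$ on all of $T_{x}M$ for $x\in\partial M$ (from $G|_{\partial M}=G'|_{\partial M}$ and $f|_{\partial M}=\mathrm{id}$), not just on $T_{x}\partial M$; it is standard and available in the magnetic literature, but you should cite or argue it explicitly rather than gesture at \cite{dpsu}. Your version is longer but more self-contained and makes the mechanism (conjugation of scattering relations under reparametrization) explicit, which the paper's citation of \cite{az} hides; the paper's version is shorter and avoids the bookkeeping you correctly identify as the main obstacle.
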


\begin{proof}
By Lemma \ref{lemma:mp_red}, we have that the reduced magnetic systems are gauge equivalent. Since gauge equivalent magnetic systems have the same boundary action function, the first part follows from Lemma \ref{lemma:basics_mp}.

The second part follows directly from the first one and \cite{az}*{Theorem 4.3}.
\end{proof}

To finish this section, let us discuss the counterexample presented in \cite{az}*{Section 3} and how is related with the new definition. There, the authors consider the $\MP$-systems $(\tilde{g},\alpha,\varphi)$ and $(\tilde{g}',\alpha,2\psi)$, where $1 \leq \varphi <\frac{3}{2}$ in $M^{\text {int }}$ and $\varphi \equiv \frac{3}{2}$ on $\partial M$, $\frac{3}{4}<\psi \leq 1$ in $M^{\text {int }}$ and $\psi \equiv \frac{3}{4}$ on $\partial M$, $\tilde{g}=\frac{1}{2(3-\varphi)}g$, $\tilde{g}'=\frac{1}{2(3-2\psi)}g$, where $g$ is a Riemannian metric in $M$, and $\alpha$ is a 1-form in $M$. These systems have the same boundary action function for energy $k=3$, but they are no gauge equivalent in the sense discussed there, i.e., there is no diffeomorphism $f \colon M \to M$ with $2\psi=\varphi \circ f$. The last claim is clearly true because $\varphi<\frac{3}{2}<2\psi$ in $M^{\text {int }}$. However, $(\tilde{g},\alpha,\varphi)$ and $(\tilde{g}',\alpha,2\psi)$ are $k$-gauge equivalent (with $k=3$). Indeed, take $\mu=\frac{(3-2\psi)}{(3-\varphi)}$. Then
\[ \frac{1}{\mu}\tilde{g}=\frac{1}{2(3-2\psi)}g=\tilde{g}'. \]
Also, 
\[ \mu(\varphi-3)+3=(2\psi-3)+3=2\psi. \]
This shows our claim. 

There are two takeaways here. First, the counter-example given by Y. Assylbekov and H. Zhou is not a counter-example anymore if we work with the definition of $k$-gauge equivalence. Secondly, the restrictions that appeared on the potential while working with $\MP$-systems with one level of energy disappear when we work with our new definition of gage, that is, we are not trying to obtain $U=f^{*}U$ and $g=f^{*}g$ using one energy level, but a more general relation. Indeed, we will see that we are able to obtain results on boundary rigidity with only one level of energy up to $k$-gauge equivalent systems, see Section \ref{sec:thms}.

\section{Rigidity Theorems} \label{sec:thms}

In this section we obtain results about rigidity of $\MP$-systems proved working on one energy level. Our proof follow the ideas of the rigidity theorems proved in \cite{az}. We reduce the $\MP$-systems to apply the rigidity results in \cite{dpsu}.

\subsection{Boundary rigidity}

We obtain the following boundary rigidity result in a conformal class, cf. \cite{az}*{Theorem 5.1}.

\begin{thm} \label{thm:bound_rid_conf_class}
Let $(g, \alpha, U)$ and $(g', \alpha', U')$ be simple $\MP$-systems on $M$ with the same boundary action functions for energy $k$. If $g'$ is conformal to $g$, that is, if there exists a smooth positive function $\mu$ such that $g'=\frac{1}{\mu}g$, then $U'=\mu(U-k)+k$ and there exist a function $\varphi \in C^{\infty}(M)$ with $\varphi|_{\partial M}=0$, such that, $\alpha'=\alpha+d\varphi$. In particular, $(g', \alpha', U')$ is $k$-gauge equivalent to $(g, \alpha, U)$.    
\end{thm}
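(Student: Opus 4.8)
The plan is to reduce everything to the magnetic setting, apply the corresponding conformal rigidity result from \cite{dpsu}, and then translate the conclusion back into the language of $\MP$-systems via the reduction. First I would form the reduced magnetic systems $(G,\alpha):=(2(k-U)g,\alpha)$ and $(G',\alpha'):=(2(k-U')g',\alpha')$, both of energy $\tfrac{1}{2}$. By Lemma \ref{lemma:basics_mp}(2) both are simple magnetic systems, and by Lemma \ref{lemma:basics_mp}(3) together with the hypothesis that the $\MP$-boundary action functions agree at energy $k$, the magnetic boundary action functions $\A_G|_{\partial M\times\partial M}$ and $\A_{G'}|_{\partial M\times\partial M}$ coincide.

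The key observation is that the conformal hypothesis $g'=\tfrac{1}{\mu}g$ upgrades to a conformal relation between the two reduced \emph{metrics}. I would like to show that $G'$ is conformal to $G$; if this holds, then the magnetic boundary rigidity theorem in a conformal class from \cite{dpsu} applies and yields a diffeomorphism $f\colon M\to M$ with $f|_{\partial M}=\mathrm{id}_{\partial M}$ and a function $\varphi\in C^{\infty}(M)$ with $\varphi|_{\partial M}=0$ such that $G'=f^{*}G$ and $\alpha'=f^{*}\alpha+d\varphi$, and moreover (from the conformal rigidity statement) $f$ is in fact the identity so that $G'=G$. From $G'=G$, i.e.\ $2(k-U')g'=2(k-U)g$, and $g'=\tfrac{1}{\mu}g$, one reads off $(k-U')\tfrac{1}{\mu}=(k-U)$, hence $U'=\mu(U-k)+k$ as claimed, and then $\alpha'=\alpha+d\varphi$ with $\varphi|_{\partial M}=0$ follows from the magnetic gauge. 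The final conclusion that $(g',\alpha',U')$ is $k$-gauge equivalent to $(g,\alpha,U)$ is then immediate from Definition \ref{defin:gauge} with this $\mu$, this $\varphi$, and $f=\mathrm{id}$.

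The main obstacle is establishing the conformality of the reduced metrics and correctly invoking the right version of the magnetic result. The subtle point is that in the conformal magnetic rigidity theorem of \cite{dpsu}, the hypothesis is that one metric is conformal to the other \emph{within the same magnetic class}, and the conclusion pins down the conformal factor to be $1$ up to a boundary-fixing diffeomorphism. Here the $\MP$-conformal assumption $g'=\tfrac{1}{\mu}g$ does not obviously make $G'$ conformal to $G$ unless one already knows how $U'$ compares to $U$; so I would need to argue that along $\MP$-geodesics the energy normalization forces the reduced metrics into a genuine conformal class, mirroring the computation in the proof of Lemma \ref{lemma:mp_red} where the reparametrization constants are matched to give energy $\tfrac{1}{2}$. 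Concretely, the cleanest route is to note $G'=2(k-U')g'=\tfrac{2(k-U')}{\mu}g$ and $G=2(k-U)g$, so $G'=\tfrac{k-U'}{\mu(k-U)}G$, which is manifestly conformal to $G$; the conformal magnetic theorem then forces this conformal factor to equal $1$, delivering both $U'=\mu(U-k)+k$ and the identity diffeomorphism at once.

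I would close by assembling these pieces: the reduction (Lemma \ref{lemma:basics_mp}), the explicit conformal factor $\tfrac{k-U'}{\mu(k-U)}$ between $G'$ and $G$, the conformal magnetic rigidity result of \cite{dpsu} giving factor $1$ and a boundary-fixing $f$ with $\alpha'=f^{*}\alpha+d\varphi$, and finally Lemma \ref{lemma:mp_red} (or a direct check against Definition \ref{defin:gauge}) to conclude $k$-gauge equivalence. The only place where genuine care is required is checking the boundary conditions $f|_{\partial M}=\mathrm{id}$ and $\varphi|_{\partial M}=0$ propagate correctly, which they do because the magnetic theorem supplies them directly.
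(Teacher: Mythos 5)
Your proposal is correct and follows essentially the same route as the paper: reduce to the magnetic systems $G=2(k-U)g$, $G'=2(k-U')g'$, observe that $G'=\frac{k-U'}{\mu(k-U)}G$ is conformal to $G$, invoke the conformal magnetic rigidity theorem of \cite{dpsu} to get $G'=G$ and $\alpha'=\alpha+d\varphi$, and read off $U'=\mu(U-k)+k$. Your intermediate worry about needing to compare $U'$ with $U$ before establishing conformality is unfounded (the factor $\frac{k-U'}{\mu(k-U)}$ is automatically a smooth positive function), and note that the cited conformal theorem concludes $g=g'$ directly with no diffeomorphism, so the detour through a boundary-fixing $f$ is unnecessary; your final ``cleanest route'' paragraph is exactly the paper's argument.
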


To prove this result, we will need the corresponding result for magnetic systems:

\begin{thm}[\cite{dpsu}*{Theorem 6.1}] \label{thm:bound_rid_conf_class_mag}
Let $(g, \alpha)$ and $(g', \alpha')$ be simple magnetic systems on $M$ whose boundary action functions $\mathbb{A}|_{\partial M \times \partial M}$ and $\mathbb{A}'|_{\partial M \times \partial M}$ coincide. If $g'$ is conformal to $g$, then $g=g'$ and $\alpha'=\alpha+dh$ for some smooth function $h$ on $M$ vanishing on $\partial M$.    
\end{thm}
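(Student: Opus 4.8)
The plan is to pass to the reduced magnetic systems, invoke the magnetic conformal rigidity result (Theorem~\ref{thm:bound_rid_conf_class_mag}), and then translate its conclusion back to the $\MP$-setting. By Lemma~\ref{lemma:basics_mp}(2) the reductions
\[ (G,\alpha)=(2(k-U)g,\alpha), \qquad (G',\alpha')=(2(k-U')g',\alpha') \]
are simple magnetic systems of energy $\tfrac12$, and by Lemma~\ref{lemma:basics_mp}(3) their boundary action functions agree on $\partial M\times\partial M$, since by hypothesis the $\MP$-boundary action functions of energy $k$ coincide.

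First I would check that $G'$ is conformal to $G$. Substituting $g'=\tfrac{1}{\mu}g$ gives
\[ G'=2(k-U')g'=\frac{2(k-U')}{\mu}\,g=\frac{k-U'}{\mu(k-U)}\,G, \]
so $G'=\lambda G$ with $\lambda=\frac{k-U'}{\mu(k-U)}$ a smooth \emph{positive} function; positivity is where the standing assumption $k>\max_M U$ (and likewise $k>\max_M U'$) enters. Thus the two simple magnetic systems have equal boundary action functions and conformal metrics, so Theorem~\ref{thm:bound_rid_conf_class_mag} applies and yields $G=G'$ together with $\alpha'=\alpha+dh$ for some $h\in C^{\infty}(M)$ with $h|_{\partial M}=0$. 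Setting $\varphi=h$ gives the relation $\alpha'=\alpha+d\varphi$ with $\varphi|_{\partial M}=0$.

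It then remains only to extract the potential identity. From $2(k-U)g=G=G'=\frac{2(k-U')}{\mu}g$ I read off the scalar relation $k-U'=\mu(k-U)$, which rearranges to $U'=\mu(U-k)+k$, exactly as claimed. Finally, the data consisting of $f=\mathrm{id}$, the function $\varphi$ above, and the given $\mu$ exhibit $(g',\alpha',U')$ as $k$-gauge equivalent to $(g,\alpha,U)$ in the sense of Definition~\ref{defin:gauge}.

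The computations are routine; the conceptual content lies in two observations. The first is that conformality of the original metrics $g,g'$ must survive the reduction---it does, with a positive factor, which is the only point using $k>\max_M U$. The second, and the real engine of the argument, is the strength of Theorem~\ref{thm:bound_rid_conf_class_mag}: for boundary-rigid magnetic systems, conformal metrics are in fact \emph{equal}, with no residual diffeomorphism. This lets us take the gauge diffeomorphism $f=\mathrm{id}$ and pins the conformal factor to the exact value $\mu=\frac{k-U'}{k-U}$, which is precisely what delivers the sharp potential relation $U'=\mu(U-k)+k$ rather than merely an abstract $k$-gauge equivalence with an unspecified $f$.
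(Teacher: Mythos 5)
Your proposal does not prove the statement at hand. The statement is Theorem \ref{thm:bound_rid_conf_class_mag}: conformal boundary rigidity for \emph{magnetic} systems $(g,\alpha)$ and $(g',\alpha')$. There is no potential $U$, no energy level $k$, and no elliptic factor $\mu$ anywhere in its hypotheses or conclusion; it is the result imported from \cite{dpsu}*{Theorem 6.1}, which this paper uses as a black box. What you have written is instead a proof of Theorem \ref{thm:bound_rid_conf_class}, the $\MP$-system analogue, and its ``real engine'' (your words) is precisely an invocation of Theorem \ref{thm:bound_rid_conf_class_mag} applied to the reduced systems $(G,\alpha)$ and $(G',\alpha')$. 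As a proof of the statement you were asked to prove, this is circular: the step ``Theorem \ref{thm:bound_rid_conf_class_mag} applies and yields $G=G'$ together with $\alpha'=\alpha+dh$'' assumes exactly the theorem to be established, in the instance $(G,\alpha)$, $(G',\alpha')$. The tell-tale sign of the misreading is that your argument's hypotheses refer to $U$, $U'$, $k$, and Lemma \ref{lemma:basics_mp}, none of which occur in the statement.

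A correct proof cannot proceed by any reduction of this kind, because the magnetic theorem is the base case of the whole scheme: $\MP$-systems reduce to magnetic systems, but magnetic systems reduce to nothing simpler here. The actual argument in \cite{dpsu} is integral-geometric. In outline: writing $g'$ conformal to $g$, one compares the time-free actions of the magnetic geodesics of one system evaluated in the other, uses the minimizing property of Ma\~n\'e's action potential for simple systems to get inequalities in both directions, and integrates over $\partial_{+}SM$ via Santal\'o's formula; the resulting inequality has the sign structure of $-\frac{1}{2}(\rho-1)^{2}\le 0$ for the conformal factor $\rho$, forcing $\rho\equiv 1$, i.e.\ $g=g'$. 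Then, with the metrics equal, equality of the boundary action functions forces the integral of $\alpha'-\alpha$ over every magnetic geodesic joining boundary points to vanish, and injectivity of the corresponding ray transform on $1$-forms for simple magnetic systems yields $\alpha'-\alpha=dh$ with $h|_{\partial M}=0$. None of this machinery appears in your proposal. Incidentally, the reduction argument you wrote is essentially verbatim the paper's own proof of Theorem \ref{thm:bound_rid_conf_class} (including the identification $\mu=\frac{k-U'}{k-U}$), so it is a correct proof --- just of a different theorem than the one posed.
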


\begin{proof}[Proof of Theorem \ref{thm:bound_rid_conf_class}]
Let $G=2(k-U) g$, $G'=2(k-U') g'$ with Ma\~n\'e action potentials of energy $\frac{1}{2}$ given by $\mathbb{A}_{G}$ and $\mathbb{A}_{G'}$, respectively. By Lemma \ref{lemma:basics_mp}, these are simple magnetic systems and $\A_{G}|_{\partial M \times \partial M}=\A_{G}|_{\partial M \times \partial M}$. 

Since $g=\frac{1}{\mu}g$, we see that
\[
G'=2(k-U')g'=2(k-U')\frac{1}{\mu} g=\frac{1}{\mu}\frac{k-U'}{k-U}G.
\]
Applying Theorem \ref{thm:bound_rid_conf_class_mag}, we get $G'=G$, i.e.
\[ \frac{k-U'}{k-U} \equiv \mu,\]
and that there is $\varphi \in C^{\infty}(M)$, with $\varphi|_{\partial M}=0$, such that $\alpha'=\alpha+d \varphi$. Hence, 
\[ g'=\frac{1}{\mu}g, \qquad\alpha'=\alpha+d\varphi, \qquad U'=\mu(U-k)+k, \]
that is, $(g,\alpha,U)$ is $k$-gauge equivalent to $(g',\alpha',U')$.
\end{proof}

Before presenting the result for analytic manifolds, let us mention some conventions. As in \cite{dpsu}, we work on a real analytic manifold $M$, with smooth boundary $\partial M$ that does not need to be analytic. We say that an object (2-tensor, 1-tensor or function) is analytic in the set $X$ (not necessarily open), if it is analytic in a neighborhood of $X$.

\begin{thm} \label{thm:bound_rid_an}
If $M$ is a real-analytic compact manifold with boundary, $(g, \alpha, U)$ and $(g', \alpha', U')$ are simple real-analytic $\MP$-systems on $M$ with the same boundary action functions for energy $k$, then these systems are $k$-gauge equivalent.    
\end{thm}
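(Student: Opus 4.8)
The plan is to follow verbatim the reduction strategy used in the conformal case (Theorem~\ref{thm:bound_rid_conf_class}), replacing the conformal magnetic rigidity input by the real-analytic one from \cite{dpsu}. First I would pass to the reduced magnetic systems $(G,\alpha)$ and $(G',\alpha')$, where $G=2(k-U)g$ and $G'=2(k-U')g'$. The key observation, and the only genuinely new point at the $\MP$-level, is that this reduction preserves real-analyticity: since $g$, $U$, $g'$, $U'$ are real-analytic on $M$ and $k$ is constant, the scaled metrics $2(k-U)g$ and $2(k-U')g'$ are real-analytic, so together with the real-analytic $1$-forms $\alpha$, $\alpha'$ they form real-analytic magnetic systems. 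By Lemma~\ref{lemma:basics_mp}(2),(3), these reductions are simple magnetic systems whose Ma\~n\'e action potentials of energy $\tfrac{1}{2}$ agree on $\partial M\times\partial M$, precisely because the original $\MP$-systems share the same boundary action function of energy $k$.

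The heart of the argument is then to invoke the real-analytic boundary rigidity theorem for simple magnetic systems established in \cite{dpsu}. Applied to $(G,\alpha)$ and $(G',\alpha')$, it produces a diffeomorphism $f\colon M\to M$ with $f|_{\partial M}=\mathrm{id}_{\partial M}$ and a function $\varphi\in C^{\infty}(M)$ with $\varphi|_{\partial M}=0$ such that $G'=f^{*}G$ and $\alpha'=f^{*}\alpha+d\varphi$. Note that, unlike in the conformal case where one concludes $G'=G$ outright, here one obtains a genuine magnetic gauge equivalence with a possibly nontrivial diffeomorphism $f$, so the full strength of the equivalence must be carried through.

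Finally I would lift this magnetic gauge equivalence back to the $\MP$-level via the reciprocal direction of Lemma~\ref{lemma:mp_red}: since the reductions of $(g,\alpha,U)$ and $(g',\alpha',U')$ are magnetic gauge equivalent at energy $\tfrac{1}{2}$, the $\MP$-systems themselves are $k$-gauge equivalent, with elliptic factor $\mu=(k-U')/(k-f^{*}U)$. The main obstacle is not in this paper's argument at all, which is essentially a transfer principle: all the substantive difficulty, namely the analytic-continuation and microlocal machinery needed to rigidify magnetic systems from analytic boundary data, is absorbed into the cited magnetic theorem. The one thing demanding care on our side is verifying that the reduction respects analyticity (immediate) and that $f$ and $\varphi$ returned by the magnetic result satisfy the boundary normalizations required by Definition~\ref{defin:gauge}, which they do since the magnetic gauge already fixes the boundary.
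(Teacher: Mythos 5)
Your proposal is correct and follows essentially the same route as the paper: reduce to the magnetic systems $G=2(k-U)g$ and $G'=2(k-U')g'$, invoke Lemma~\ref{lemma:basics_mp} and the real-analytic magnetic rigidity theorem of \cite{dpsu} (Theorem~\ref{thm:bound_rid_an_mag}), and lift the magnetic gauge equivalence back via the converse direction of Lemma~\ref{lemma:mp_red} with $\mu=(k-U')/(k-f^{*}U)$. Your added observation that the reduction preserves real-analyticity is a point the paper leaves implicit, but otherwise the arguments coincide.
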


This is a generalization of the following result that will be used in the proof.

\begin{thm}[\cite{dpsu}*{Theorem 6.2}] \label{thm:bound_rid_an_mag}
If $M$ is a real-analytic compact manifold with boundary, and $(g, \alpha)$ and $(g', \alpha')$ are simple real-analytic magnetic systems on $M$ with the same boundary action function, then these systems are gauge equivalent.    
\end{thm}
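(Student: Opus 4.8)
The plan is to adapt to the magnetic setting the analytic boundary rigidity method of Lassas, Sharafutdinov, and Uhlmann for Riemannian metrics. Since both systems are simple, the boundary action function and the scattering relation $\mathcal{S}$ (Definition \ref{defin:scattering}) determine one another, so I may freely pass between them. The argument splits into three ingredients: (i) a \emph{boundary determination} step, in which the data fixes the full Taylor expansion of $g$ and $\alpha$ along $\partial M$, modulo the magnetic gauge; (ii) the recovery, by analytic continuation, of the entire \emph{action representation} $\{\,y \mapsto \mathbb{A}(p,y)\ :\ p \in M\,\}$ of interior points by their action-distances to $\partial M$, from the boundary-to-boundary data alone; and (iii) the fact that this representation, together with analyticity and a connectedness argument, pins $(g,\alpha)$ down up to gauge on all of $M$.

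First I would carry out the boundary determination. Working in boundary semigeodesic coordinates adapted to $(g,\alpha)$, one examines short magnetic geodesics joining nearby points of $\partial M$: their action lengths depend on increasingly high normal derivatives of the metric and of the magnetic potential, so $\mathbb{A}|_{\partial M\times\partial M}$ determines the jets of $g$ and $\alpha$ at $\partial M$, the latter only modulo an exact form $d\varphi$ with $\varphi|_{\partial M}=0$ (this ambiguity is genuine and cannot be removed). Exploiting the two components of the magnetic gauge --- a boundary-fixing diffeomorphism $f$ and the closed-form freedom in $\alpha$ --- I would then normalize $(g',\alpha')$ so that it agrees with $(g,\alpha)$ to infinite order along $\partial M$.

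Next comes the analytic heart of the argument. For $p$ in a collar of $\partial M$, the value $\mathbb{A}(p,y)$ for $y\in\partial M$ can be obtained from the boundary-to-boundary data by following the minimizing magnetic geodesic, which for such $p$ remains close to $\partial M$. On a simple real-analytic magnetic system, $\mathbb{A}$ is real-analytic off the diagonal and, as a function of the base point, each $\mathbb{A}(p,\cdot)$ solves the eikonal-type Hamilton--Jacobi equation
\[ H\bigl(y, d_{y}\mathbb{A}(p,y)\bigr)=\tfrac12, \qquad H(y,\xi)=\tfrac12\,|\xi+\alpha(y)|_{g(y)}^{2}. \]
Hence the interior functions $\mathbb{A}(p,\cdot)$ are the analytic continuations of the collar ones and are determined by the boundary data; for the two normalized systems they therefore coincide. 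Since the gradients $d_{y}\mathbb{A}(p,y)$ encode the exit directions of magnetic geodesics issued toward $\partial M$, the action representation recovers the magnetic geodesic flow, and thus $(g,\alpha)$ up to gauge.

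To globalize, I would run a connectedness argument: the set on which the two normalized systems coincide is closed, is open by analyticity, and is nonempty (it contains the collar). Simplicity guarantees that the magnetic exponential map is a diffeomorphism onto $M$, so $M$ is covered and the set is all of $M$; equivalently, the globally determined action representation fixes $(g,\alpha)$ up to gauge everywhere. The hardest part will be ingredient (ii): rigorously justifying that the interior action representation is the analytic continuation of the boundary data and that it determines the system modulo \emph{exactly} the magnetic gauge. Relative to the Riemannian case, the genuine extra difficulty is tracking the $1$-form $\alpha$ and its $d\varphi$-ambiguity in parallel with the metric and its diffeomorphism gauge, and verifying that the boundary jets and the recovered representation never determine $\alpha$ beyond the class $\alpha + d\varphi$.
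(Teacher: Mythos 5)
This theorem is not proved in the paper at all: it is quoted verbatim as Theorem~6.2 of \cite{dpsu} and used as a black box, so the only meaningful comparison is with the proof in \cite{dpsu} itself. Your outline reconstructs essentially that proof --- the Lassas--Sharafutdinov--Uhlmann scheme adapted to magnetic systems: boundary determination of the jets of $g$ and $\alpha$ modulo the $d\varphi$-ambiguity, gauge normalization at $\partial M$, recovery of the action representation $\mathbb{A}(p,\cdot)|_{\partial M}$ by analytic continuation via the Hamilton--Jacobi equation $H\bigl(y,d_{y}\mathbb{A}\bigr)=\tfrac12$, and an open--closed connectedness argument --- with the caveat that the crux you yourself flag in ingredient (ii) (in particular, arranging the normalizing diffeomorphism to be analytic in the interior, which \cite{dpsu} achieves by building it out of the action representations) is precisely where the cited proof invests its effort.
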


\begin{proof}[Proof of Theorem \ref{thm:bound_rid_an}]
Let $G=2(k-U) g$, $G'=2(k-U') g'$ with Ma\~n\'e action potentials of energy $\frac{1}{2}$ given by $\mathbb{A}_{G}$ and $\mathbb{A}_{G'}$, respectively. By Lemma \ref{lemma:basics_mp}, these are simple real-analytic magnetic systems and $\A_{G}|_{\partial M \times \partial M}=\A_{G}|_{\partial M \times \partial M}$. 

In virtue of Theorem \ref{thm:bound_rid_an_mag}, the magnetic systems $(G, \alpha)$ and $(G', \alpha')$ are gauge equivalent. Hence, by Lemma \ref{lemma:mp_red}, we obtain the desire result.
\end{proof}

Finally, in dimension 2 we obtain boundary rigidity for $\MP$-systems up to $k$-gauge equivalence. This generalizes the celebrated result \cite{pu2005}*{Theorem 1.1}.

\begin{thm} \label{thm:bound_rid_dim2}
If $\dim M=2$ and $(g, \alpha, U)$ and $(g', \alpha', U')$ are simple $\MP$-systems on $M$ with the same boundary action functions of energy $k$, then these systems are $k$-gauge equivalent.    
\end{thm}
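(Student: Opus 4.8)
The plan is to mirror exactly the reduction-and-pullback strategy used for the conformal and analytic cases. Given the two simple $\MP$-systems $(g,\alpha,U)$ and $(g',\alpha',U')$ on a surface $M$ with the same boundary action functions of energy $k$, I would first form their reduced magnetic systems $(G,\alpha):=(2(k-U)g,\alpha)$ and $(G',\alpha'):=(2(k-U')g',\alpha')$. By Lemma \ref{lemma:basics_mp}(2) these are simple magnetic systems on the two-dimensional manifold $M$, and by Lemma \ref{lemma:basics_mp}(3) their Ma\~n\'e action potentials of energy $\frac12$ agree on $\partial M \times \partial M$, i.e.\ $\A_{G}|_{\partial M\times\partial M}=\A_{G'}|_{\partial M\times\partial M}$. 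This is the same opening move as in the proofs of Theorems \ref{thm:bound_rid_conf_class} and \ref{thm:bound_rid_an}; the only thing that changes is which magnetic rigidity theorem I feed the reduced data into.

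The second step is to invoke the two-dimensional boundary rigidity result for simple magnetic systems from \cite{dpsu} (the magnetic analogue of \cite{pu2005}*{Theorem 1.1}), which should assert that two simple magnetic systems on a surface with equal boundary action functions are gauge equivalent. Applying it to $(G,\alpha)$ and $(G',\alpha')$ yields a diffeomorphism $f\colon M\to M$ with $f|_{\partial M}=\mathrm{id}_{\partial M}$ and a smooth function $\varphi$ with $\varphi|_{\partial M}=0$ such that $G'=f^{*}G$ and $\alpha'=f^{*}\alpha+d\varphi$. At this point the two magnetic systems are gauge equivalent and arise as reductions of the given $\MP$-systems, so the hypotheses of the second half of Lemma \ref{lemma:mp_red} are met verbatim.

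The final step is therefore a direct citation: by Lemma \ref{lemma:mp_red} (the ``reciprocally'' direction), gauge equivalence of the reduced magnetic systems forces the original $\MP$-systems to be $k$-gauge equivalent, with elliptic factor $\mu=\frac{k-U'}{k-f^{*}U}$. Concretely, $G'=f^{*}G$ reads $2(k-U')g'=2(k-f^{*}U)f^{*}g$, from which $g'=\frac{1}{\mu}f^{*}g$, $U'=\mu(f^{*}U-k)+k$, and $\alpha'=f^{*}\alpha+d\varphi$ follow, which is precisely Definition \ref{defin:gauge}. Thus the proof is essentially three lines once the magnetic input is in hand, exactly parallel to the proof of Theorem \ref{thm:bound_rid_an}.

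The genuine content of this theorem is not in any new argument but in the availability of the surface magnetic rigidity theorem, so the only point requiring care is to cite the correct statement from \cite{dpsu} and to confirm it applies to our reduced systems. The reduced metrics $G,G'$ are honest Riemannian metrics on the surface because $k>\max_M U$ and $k>\max_M U'$ guarantee the conformal factors $2(k-U)$ and $2(k-U')$ are strictly positive, and Lemma \ref{lemma:basics_mp}(2) certifies simplicity; no dimension-specific subtlety enters the reduction itself. Hence the main (and in fact only) obstacle is purely bibliographic: ensuring the two-dimensional magnetic boundary rigidity result is stated and cited, after which the argument closes by the same Lemma \ref{lemma:mp_red} mechanism as the analytic case.
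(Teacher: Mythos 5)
Your proposal is correct and follows exactly the paper's own argument: reduce to the magnetic systems $(2(k-U)g,\alpha)$ and $(2(k-U')g',\alpha')$ via Lemma \ref{lemma:basics_mp}, apply the two-dimensional magnetic boundary rigidity theorem of \cite{dpsu} (Theorem \ref{thm:bound_rid_dim2_mag}), and pull the gauge equivalence back with the converse direction of Lemma \ref{lemma:mp_red}. The explicit identification of the elliptic factor $\mu=\frac{k-U'}{k-f^{*}U}$ is a welcome extra detail that the paper delegates to Lemma \ref{lemma:mp_red}.
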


As before, this theorem also generalizes the result for magnetic systems, which will be useful for the proof

\begin{thm}[\cite{dpsu}*{Theorem 7.1}] \label{thm:bound_rid_dim2_mag}
If $\dim M=2$ and $(g, \alpha)$ and $(g', \alpha')$ are simple magnetic systems on $M$ with the same boundary action function, then these systems are gauge equivalent.    
\end{thm}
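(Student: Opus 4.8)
The plan is to prove the two-dimensional rigidity directly by the Pestov--Uhlmann method for simple surfaces (\cite{pu2005}), transported to the magnetic flow as in \cite{dpsu}, and to reduce the final step to the conformal rigidity already recorded in Theorem \ref{thm:bound_rid_conf_class_mag}. First I would replace the boundary action function by the scattering relation: for simple magnetic systems the two carry the same information, and from either one recovers the full jets of $g$ and, up to an exact form, of $\alpha$ along $\partial M$. After a preliminary gauge transformation equal to the identity on $\partial M$, I may therefore assume that $g$ and $g'$ agree to infinite order on $\partial M$, and likewise $\alpha$ and $\alpha'$ up to $d\varphi$ with $\varphi|_{\partial M}=0$.

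The genuinely two-dimensional content is the recovery of the conformal class of the metric from the scattering data, and this is where the main work lies. Working on the unit circle bundle $SM$ with the generator $X$ of the magnetic flow, I would establish a magnetic Pestov identity and combine it with the fiberwise Hilbert transform $H$ and the Pestov--Uhlmann commutator formula for $[X,H]$, which now carries additional terms coming from the Lorentz force $Y$. This yields injectivity, modulo the natural potential (gauge) terms, of the magnetic X-ray transform acting on the pair consisting of a symmetric $2$-tensor and a $1$-form. A pseudolinearization identity then converts the hypothesis into an instance of this injectivity: since the boundary action functions coincide, integrating the difference of the two Hamiltonians along the magnetic geodesics of one system shows that the X-ray transform of the difference of the metric and magnetic data vanishes. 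Injectivity forces the trace-free part of $g-g'$ to be a potential term, hence removable by a diffeomorphism $f\colon M\to M$ with $f|_{\partial M}=\mathrm{id}$; that is, $f^{*}g'$ is conformal to $g$.

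Since $f$ fixes the boundary, the systems $(g,\alpha)$ and $(f^{*}g',f^{*}\alpha')$ have the same boundary action function, and the two metrics are now conformal. Theorem \ref{thm:bound_rid_conf_class_mag} then applies and gives $f^{*}g'=g$ together with $f^{*}\alpha'=\alpha+dh$ for some $h$ vanishing on $\partial M$. Composing $f$ with the resulting exact gauge exhibits $(g',\alpha')$ and $(g,\alpha)$ as gauge equivalent, which is the assertion of the theorem.

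I expect the main obstacle to be the injectivity step in two dimensions. Unlike the purely geodesic case of \cite{pu2005}, the commutator $[X,H]$ and the associated energy identity pick up contributions from the Lorentz force, and controlling these together with the boundary terms in the Pestov estimate and the correct solenoidal decomposition of the coupled metric/magnetic data is the crux; the reduction to the conformal class and the final assembly via Theorem \ref{thm:bound_rid_conf_class_mag} are then comparatively formal.
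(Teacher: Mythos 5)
The paper never proves this statement---it is quoted verbatim from \cite{dpsu}*{Theorem 7.1} and used as a black box in the proof of Theorem \ref{thm:bound_rid_dim2}---so your proposal has to be measured against the original proof in \cite{dpsu}. Your first and last steps do match it: passing from the boundary action function to the scattering relation together with the boundary jets of $(g,\alpha)$, and a final reduction to the conformal case (Theorem \ref{thm:bound_rid_conf_class_mag}). The genuine gap is in the middle, exactly where you locate the main content. The pseudolinearization identity does not give what you claim: for two \emph{arbitrary} simple systems with equal boundary action functions it expresses the (vanishing) difference of boundary data as an integral of the difference of the Hamiltonians along a mixed flow, with weights depending on \emph{both} systems; it collapses to the linear magnetic X-ray transform of $(g-g',\alpha-\alpha')$ only after linearizing along a one-parameter family, or when the systems are a priori close. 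That is precisely why such arguments produce local and generic rigidity (cf.\ \cite{dpsu}*{Theorem 6.5}), not the global two-dimensional theorem. Even granting the vanishing of the linear transform, concluding that the trace-free part of $g-g'$ is ``removable by a diffeomorphism'' conflates the linearized kernel description ($h$ a symmetrized derivative of a boundary-vanishing vector field) with the nonlinear statement that $f^{*}g'$ is conformal to $g$. Finally, the injectivity you invoke---the magnetic ray transform on pairs (symmetric $2$-tensor, $1$-form) over a simple surface---does not follow from a Pestov identity plus the commutator formula; what \cite{dpsu} actually proves and uses is the much easier function/$1$-form case, and $2$D tensor tomography even for geodesic flows required holomorphic integrating factors and was settled only later by Paternain--Salo--Uhlmann.

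The actual proof of \cite{dpsu}*{Theorem 7.1} avoids tensor tomography altogether. Following \cite{pu2005}, one shows that the scattering relation of a simple magnetic surface determines the Dirichlet-to-Neumann map of the Laplace--Beltrami operator: this uses solvability of the transport equation on $SM$ for the magnetic generator $G=X+\lambda V$ and the fiberwise Hilbert transform commutator (note $[H,V]=0$, so the Lorentz force enters more tamely than you anticipate). One then invokes the theorem of Lassas--Uhlmann to recover $(M,g)$ up to a conformal diffeomorphism fixing $\partial M$, and only at that point applies the conformal result (Theorem \ref{thm:bound_rid_conf_class_mag}) to pin down the conformal factor and recover $\alpha$ up to $d\varphi$ with $\varphi|_{\partial M}=0$. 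So your final assembly is correct, but the bridge you build to the conformal situation would not carry the weight; the DN-map route is the missing idea.
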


\begin{proof}[Proof of Theorem \ref{thm:bound_rid_dim2}]
Let $G=2(k-U) g$, $G'=2(k-U') g'$ with Ma\~n\'e action potentials of energy $\frac{1}{2}$ given by $\mathbb{A}_{G}$ and $\mathbb{A}_{G'}$, respectively. By Lemma \ref{lemma:basics_mp}, these are simple real-analytic magnetic systems and $\A_{G}|_{\partial M \times \partial M}=\A_{G}|_{\partial M \times \partial M}$.  In light of Theorem \ref{thm:bound_rid_dim2_mag}, the magnetic systems $(G,\alpha)$ and $(G',\alpha')$ are gauge equivalent. Now, Lemma \ref{lemma:mp_red} implies that the $\MP$-systems $(g,\alpha,U)$ and $(g',\alpha',U')$ are $k$-gauge equivalent, finishing the proof. 
\end{proof}

\begin{rmk}
    We can relax the simplicity hypothesis in the theorems of this section. Let $M$ be a compact smooth manifold with boundary, Take $(g,\alpha,U)$ be a simple $\MP$-system on $M$, and let $(g',\alpha',U)$ be a $\MP$-system on $M$. Then, $(2(k-U)g,\alpha)$ is a magnetic simple system in virtue of Lemma \ref{lemma:basics_mp}. Assume that $2(k-U)g|_{\partial M}=2(k-U')g'|_{\partial M}$ and $i^{*}\alpha=i^{*}\alpha'$, where $i\colon \partial M \to M$ is the embedding map. If the restricted scattering relation (see \cite{dpsu}*{Definition 2.4}) of the reduced magnetic system, $\mathfrak{s}$, $\mathfrak{s}'$, are the same, then \cite{herrerosmag}*{Theorem 4.1} implies that $(2(k-U')g',\alpha')$ is also a simple magnetic system. Hence, we can invoke Lemma \ref{lemma:basics_mp} to conclude that $(g',\alpha',U')$ is a simple $\MP$-system on $M$.
\end{rmk}

\subsection{Scattering rigidity}

It is known that for simple $\MP$-systems, the boundary rigidity problem is equivalent to the problem of recovering the data $(g,\alpha,U)$ from knowing of the scattering relation, see \cite{az}*{Section~4.2}. So, we can translate Theorems \ref{thm:bound_rid_conf_class}, \ref{thm:bound_rid_an}, and \ref{thm:bound_rid_dim2} to obtain scattering rigidity results, which we now state.

\begin{thm}
Let $(g, \alpha, U)$ and $(g', \alpha', U')$ be simple $\MP$-systems on $M$ with the same scattering relation for energy $k$. If $g'$ is conformal to $g$, then there exist a strictly positive function $\mu \in C^{\infty}$, and a function $\varphi \in C^{\infty}(M)$ with $\varphi|_{\partial M}=0$, such that $g'=\frac{1}{\mu}g$, $\alpha'=\alpha+d\varphi$, and $U'=\mu(U-k)+k$. In particular, $(g', \alpha', U')$ is $k$-gauge equivalent to $(g, \alpha, U)$.      
\end{thm}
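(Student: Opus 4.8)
The plan is to convert the scattering hypothesis into a boundary-action hypothesis and then invoke the conformal boundary rigidity result already proved, Theorem~\ref{thm:bound_rid_conf_class}. The mechanism is the equivalence, valid for simple $\MP$-systems, between the scattering relation and the boundary action function at a fixed energy (\cite{az}*{Section~4.2}): once one knows that $\mathcal{S}=\mathcal{S}'$ at energy $k$ forces $\A_{g,\alpha,U}|_{\partial M\times\partial M}=\A_{g',\alpha',U'}|_{\partial M\times\partial M}$, writing $g'=\frac{1}{\mu}g$ for the given conformal factor, Theorem~\ref{thm:bound_rid_conf_class} immediately yields $U'=\mu(U-k)+k$ and $\alpha'=\alpha+d\varphi$ with $\varphi|_{\partial M}=0$, which is exactly $k$-gauge equivalence.

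Before the equivalence can be applied I would extract the boundary data hidden in the hypothesis. Saying $\mathcal{S}=\mathcal{S}'$ presupposes equal domains $\partial_{+}S^{k}M$; over $x\in\partial M$ the fibres are the spheres $\{|v|_{g(x)}^{2}=2(k-U(x))\}$ and $\{|v|_{g'(x)}^{2}=2(k-U'(x))\}$, and since $g'=\frac{1}{\mu}g$ the latter reads $\{|v|_{g(x)}^{2}=2\mu(x)(k-U'(x))\}$. Equality of the two gives only the normalized identity $\mu(k-U')=k-U$ on $\partial M$, not the individual matchings. The full equality of the scattering maps, however, constrains the near-grazing dynamics and hence the boundary jets; this is what should upgrade the normalized identity to $g|_{\partial M}=g'|_{\partial M}$, $U|_{\partial M}=U'|_{\partial M}$, and $i^{*}\alpha=i^{*}\alpha'$, the data needed to integrate the recorded velocities into a boundary action function.

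With these matchings in hand, the reverse direction of the equivalence produces $\A|_{\partial M\times\partial M}=\A'|_{\partial M\times\partial M}$ and Theorem~\ref{thm:bound_rid_conf_class} closes the argument. A parallel route mirrors the proofs of Section~\ref{sec:thms}: transport $\mathcal{S}=\mathcal{S}'$ to the reductions of Lemma~\ref{lemma:basics_mp} through the boundary bijection $v\mapsto v/(2(k-U(x)))$, which coincides for the two systems exactly because $U|_{\partial M}=U'|_{\partial M}$. This produces equal magnetic scattering relations for $(2(k-U)g,\alpha)$ and $(2(k-U')g',\alpha')$, whence the boundary--scattering equivalence for simple magnetic systems of \cite{dpsu}, together with Theorem~\ref{thm:bound_rid_conf_class_mag}, yields $G=G'$ and $\alpha'=\alpha+d\varphi$, and Lemma~\ref{lemma:mp_red} pulls this back to $k$-gauge equivalence.

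The main obstacle is precisely the boundary determination step: showing that $\mathcal{S}=\mathcal{S}'$ forces $U|_{\partial M}=U'|_{\partial M}$ and $i^{*}\alpha=i^{*}\alpha'$ rather than merely the normalized relation above. Reconstructing $\A|_{\partial M\times\partial M}$ from the scattering relation amounts to integrating the tangential momenta $i^{*}(v^{\flat}-\alpha)$ of the connecting $\MP$-geodesics and fixing the additive constant by $\A(x,x)=0$; this genuinely requires $i^{*}\alpha$, a gauge invariant unchanged under $\alpha\mapsto\alpha+d\varphi$ with $\varphi|_{\partial M}=0$, so that the two reconstructions can be compared only once the boundary potentials agree. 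Carrying this out is the analytic heart of the equivalence recorded in \cite{az}*{Section~4.2}; modulo it, the theorem reduces formally to Theorem~\ref{thm:bound_rid_conf_class}.
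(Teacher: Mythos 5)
Your proposal follows essentially the same route as the paper: the paper's entire argument for this theorem is to invoke the equivalence between the scattering and boundary rigidity problems for simple $\MP$-systems from \cite{az}*{Section~4.2} and then apply Theorem~\ref{thm:bound_rid_conf_class}. The boundary-determination subtlety you flag (making sense of $\mathcal{S}=\mathcal{S}'$ requires $g|_{\partial M}=g'|_{\partial M}$, $U|_{\partial M}=U'|_{\partial M}$, $i^{*}\alpha=i^{*}\alpha'$) is not resolved in the paper either; it is only acknowledged in the closing remark of that subsection as an additional hypothesis ensuring the energy level sets agree, so your treatment is, if anything, more explicit than the original.
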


\begin{thm}
If $M$ is a real-analytic compact manifold with boundary, $(g, \alpha, U)$ and $(g', \alpha', U')$ are simple real-analytic $\MP$-systems on $M$ with the scattering relation for energy $k$, then these systems are $k$-gauge equivalent.        
\end{thm}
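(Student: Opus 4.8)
The plan is to deduce this scattering-rigidity statement from its boundary-rigidity counterpart, Theorem~\ref{thm:bound_rid_an}, by using the equivalence between the scattering relation and the boundary action function for simple systems. Concretely, once I know that two simple $\MP$-systems sharing the same scattering relation at energy $k$ also share the same boundary action function at energy $k$, the conclusion is immediate: Theorem~\ref{thm:bound_rid_an} then yields that $(g,\alpha,U)$ and $(g',\alpha',U')$ are $k$-gauge equivalent. Thus the whole argument reduces to establishing (or invoking) the data-equivalence ``same scattering relation $\Rightarrow$ same boundary action function'' in the analytic simple setting.

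To justify that equivalence I would pass through the magnetic reduction. Set $G=2(k-U)g$ and $G'=2(k-U')g'$; by Lemma~\ref{lemma:basics_mp}(2) these are simple real-analytic magnetic systems of energy $\frac{1}{2}$. By the Jacobi--Maupertuis principle, Lemma~\ref{lemma:basics_mp}(1), each $\MP$-geodesic and its associated magnetic geodesic are reparametrizations of one another and therefore trace the same unparametrized curve in $M$; in particular they enter and exit $\partial M$ at the same points, and the exit velocities differ only by the scalar factor $2(k-U)$ evaluated at the exit point. Since the domain $\partial_{+}S^{k}M$ of the $\MP$-scattering relation records, over each boundary point, the sphere $\{\,|v|_{g}^{2}=2(k-U)\,\}$, the agreement of the two $\MP$-scattering relations fixes the boundary combination $2(k-U)g|_{\partial M}=G|_{\partial M}$, hence the reparametrization factor on $\partial M$. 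This lets me convert the $\MP$-scattering data into the scattering data of the reduced magnetic systems and conclude that $(G,\alpha)$ and $(G',\alpha')$ have the same scattering relation at energy $\frac{1}{2}$. Invoking the equivalence of scattering and boundary rigidity for simple magnetic systems (\cite{dpsu}; see also the discussion around \cite{az}*{Section~4.2}) gives $\A_{G}|_{\partial M\times\partial M}=\A_{G'}|_{\partial M\times\partial M}$, after which Theorem~\ref{thm:bound_rid_an_mag} shows that $(G,\alpha)$ and $(G',\alpha')$ are gauge equivalent and Lemma~\ref{lemma:mp_red} upgrades this to $k$-gauge equivalence of the $\MP$-systems.

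The step I expect to be the main obstacle is the boundary bookkeeping in the second paragraph: verifying that ``the same scattering relation'' for the two $\MP$-systems genuinely pins down the boundary jets needed to recover the boundary action function, rather than merely the abstract map on an a~priori common domain. Equivalently, one must check that the classical recovery of the boundary metric (and, here, the boundary potential) from the scattering relation on simple manifolds carries over to the $\MP$-setting, so that the reparametrization factors match along $\partial M$ and the conversion to magnetic scattering data is legitimate. Once this is settled---most cleanly by citing the $\MP$-version of the equivalence established in \cite{az}*{Section~4.2}---the remainder is a direct application of the magnetic analytic rigidity theorem together with the reduction lemmas.
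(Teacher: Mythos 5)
Your proposal is correct and follows essentially the same route as the paper: the paper likewise deduces these scattering statements from the boundary rigidity theorems by invoking the equivalence of the scattering relation and the boundary action function for simple $\MP$-systems established in \cite{az}*{Section~4.2}. The only difference is that you flesh out (via the magnetic reduction) the data-equivalence step that the paper simply cites, and your concern about the boundary bookkeeping matches the paper's own remark that one should assume $g|_{\partial M}=g'|_{\partial M}$, $U|_{\partial M}=U'|_{\partial M}$ and $i^{*}\alpha=i^{*}\alpha'$ to make sense of equating the scattering relations.
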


\begin{thm}
If $\dim M=2$ and $(g, \alpha, U)$ and $(g', \alpha', U')$ are simple $\MP$-systems on $M$ with the same scattering relation of energy $k$, then these systems are $k$-gauge equivalent.        
\end{thm}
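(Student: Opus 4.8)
The plan is to deduce the statement from the boundary rigidity Theorem~\ref{thm:bound_rid_dim2}, using that for simple $\MP$-systems the scattering relation and the boundary action function encode the same information (\cite{az}*{Section~4.2}). Thus the one new input I must supply is that equal scattering relations at energy $k$ force equal boundary action functions at energy $k$; once this is in hand, Theorem~\ref{thm:bound_rid_dim2} applies verbatim and yields the $k$-gauge equivalence. I would organize the argument through the magnetic reduction, exactly as in the proofs of Theorems~\ref{thm:bound_rid_conf_class}, \ref{thm:bound_rid_an} and \ref{thm:bound_rid_dim2}.

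First I would extract the boundary data. At each $x\in\partial M$ the fiber of $\partial_{+}S^{k}M$ is the half-ellipsoid $\{v\in T_xM:\ |v|_{g(x)}^{2}=2(k-U(x)),\ (v,\nu(x))_{g(x)}\ge 0\}$, so the set $\partial_{+}S^{k}M$ determines the reduced boundary metric $G|_{\partial M}=2(k-U)g|_{\partial M}$. Hence equality of the scattering relations (as maps with a common domain) forces $G|_{\partial M}=G'|_{\partial M}$ for the reductions $G=2(k-U)g$, $G'=2(k-U')g'$ of Lemma~\ref{lemma:basics_mp}, which are simple magnetic systems by Lemma~\ref{lemma:basics_mp}(2).

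Next I would transport the scattering data to the reductions. By the Jacobi--Maupertuis reparametrization of Lemma~\ref{lemma:basics_mp}(1), the change of variable $s(t)=\int_{0}^{t}2(k-U(\sigma))\,dt$ turns each $\MP$-geodesic of energy $k$ into a unit-speed magnetic geodesic of $(G,\alpha)$; this fixes the endpoints on $\partial M$ and preserves the direction of the velocity, so two $\MP$-systems with the same scattering relation at energy $k$ have reductions with the same magnetic scattering relation at energy $\tfrac12$. Invoking the scattering-to-boundary direction of the equivalence for simple magnetic systems (\cite{dpsu}, see also \cite{herrerosmag}) gives $\A_{G}|_{\partial M\times\partial M}=\A_{G'}|_{\partial M\times\partial M}$, and Lemma~\ref{lemma:basics_mp}(3) rewrites this as $\A|_{\partial M\times\partial M}=\A'|_{\partial M\times\partial M}$. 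Theorem~\ref{thm:bound_rid_dim2} then finishes the proof; equivalently, Lemma~\ref{lemma:mp_red} could be applied to the gauge equivalence of $(G,\alpha)$ and $(G',\alpha')$ supplied by magnetic dimension-two rigidity.

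The main obstacle is the passage from the scattering relation to the boundary action function, i.e.\ the converse of Lemma~\ref{lemma:mp_data}. It rests on the first-variation formula for the Ma\~n\'e action, by which the tangential endpoint momenta of the connecting geodesic equal $-d_x\A$ and $d_y\A$; the scattering relation supplies the endpoint velocities, so together with $G|_{\partial M}$ and $i^{*}\alpha$ one recovers these differentials and integrates them using $\A(x,x)=0$. The delicate points are recovering $i^{*}\alpha$ (a gauge invariant, since $i^{*}d\varphi=0$ for $\varphi|_{\partial M}=0$) from the magnetic bending encoded in the scattering relation, and checking the resulting differentials are exact; both are handled by the cited equivalences in \cite{az}*{Section~4.2} and \cite{dpsu}, so in the write-up this step enters as a citation rather than a computation.
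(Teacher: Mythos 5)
Your proposal is correct and takes essentially the same route as the paper: the paper's entire proof of the scattering results consists of invoking the equivalence, for simple $\MP$-systems, between the scattering relation and the boundary action function (\cite{az}*{Section~4.2}) and then applying Theorem~\ref{thm:bound_rid_dim2}, which is exactly your plan (routed, as in the paper, through the magnetic reduction and \cite{dpsu}). The only small slip, which does not affect the argument, is that the fiber of $\partial_{+}S^{k}M$ over $x$ is the unit sphere of $g/(2(k-U))$ at $x$ rather than of $G=2(k-U)g$, so the common domain pins down $g/(2(k-U))|_{\partial M}$; this is consistent with the compatibility conditions the paper itself imposes to make sense of equating the scattering relations.
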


Note that to make sense of the condition about the equality of the scattering relation, one should have $\{E=k\}=\{E'=k\}$, where $E$ and $E'$ are the energies for $(g, \alpha, U)$ and $(g', \alpha', U')$, respectively. A reasonable condition that ensures the equality of the level sets is that $g|_{\partial M}=g'|_{\partial M}$, $U|_{\partial M}=U'|_{\partial M}$ and $i^{*}\alpha=i^{*}\alpha'$, where $i\colon \partial M \to M$ is the embedding map.

\section{Final remarks} \label{sec:rmk}

As was commented in the introduction, there are results on boundary rigidity and lens rigidity working with the foliation condition \cite{suv}, \cite{zhou18}. In would be interesting to know if the results here and in \cite{dpsu} can be obtained under that condition instead of the simplicity assumption.

Another possible direction of work is to explore if one can obtain generic boundary rigidity results as in \cite{dpsu}*{Theorem 6.5}. This could require a study of the linearized problem. However, one could also try a more direct method using the relations between magnetic and $\MP$-systems.

Finally, one could try to apply the main tool that we used, that is, the relation between the $\MP$-system $(g,\alpha,U)$ with the magnetic system $(2(k-U),\alpha)$ to generalize results involving magnetic systems. For example, it would be interesting to study the lens rigidity problem for $\MP$-systems (see \cite{herrerosmag}, \cite{zhou18}), or obtaining a reconstruction procedure for the $\MP$-system as for magnetic systems (\cite{du}), study the marked length spectrum rigidity problem for $\MP$-systems (see \cite{mr23}), and study inverse source problems for domains with a non-Euclidean metric, generalizing \cite{lz}.

\appendix

\section{The MP-flow}

In this appendix we summarize some ways of understand the $\MP$-flow. The results comes from the relation between Hamiltonian and Lagrangian mechanics on symplectic manifolds, see \cite{am}*{Chapter 3}.

First we introduce some notation. Let $\pi \colon TM \to M$ and $p \colon T^{*}M \to M$ be the base point projections. Let $\omega_{c}$ be the canonical symplectic form on $T^{*}M$. If we use coordinates $(x,\xi)$ on the cotangent bundle, locally we have
\[ \omega_{c}=\sum_{i=1}^{n} dx^{i} \wedge d\xi^{i}. \]
By using the musical isomorphism $\flat \colon TM \to T^{*}M$, we can pull back $\omega_{c}$ to obtain the canonical symplectic form $\omega_{0}$ on $TM$, given locally by
\begin{align*}
    \omega_{0} &=\sum_{i,j=1}^{n} \left \lbrace  \left(\sum_{k=1}^{n} v^{k} \partial_{x^{j}}g_{ik}  \right) dx^{i} \wedge dx^{j}+g_{ij}dx^{i}\wedge dv^{j}  \right \rbrace,
\end{align*}
see \cite{am}*{Theorem 3.2.13}. We will also consider the twisted versions of these forms. Let $\Omega$ be a 2-form on $M$, then we can pull-it back to a 2-form on $TM$ or $T^{*}M$ by considering $\pi^{*}\Omega$ and $p^{*}\Omega$, respectively. We can twist the symplectic forms on $TM$ and $T^{*}M$ to obtain
\[ \omega:=\omega_{0}+\pi^{*}\Omega, \quad \omega^{*}:=\omega_{c}+p^{*}\Omega. \]
The $\Omega$ that we will consider is the closed 2-form that symbolizes the magnetic part of the system. 

\begin{lemma} \label{lemma:mp_flow} \hfill
\begin{enumerate}
    \item When $\Omega=d\alpha$, the $\MP$-flow is the Euler--Lagrange flow corresponding to the Lagrangian $L(x,v)=\frac{1}{2}|v|_{g}^{2}-\alpha_{x}(v)-U(x)$. Furthermore, the $\MP$-geodesics satisfy the Euler--Lagrange equations.
    \item The $\MP$-flow is the Hamiltonian flow on $(TM,\omega,E)$. Furthermore, the projected trajectories of the flow to $M$ satisfy the equation of $\MP$-geodesics.
    \item The Hamiltonian flow on $(T^{*}M,\omega^{*},\frac{1}{2}|\xi|_{g}^{2}+U)$ identified with a flow on $TM$ by the musical isomorphism, coincides with the $\MP$-flow. Furthermore, the projected trajectories of the flow on $TM$ to $M$ satisfy the equation of $\MP$-geodesics.
    \item When $\Omega=d\alpha$, the Hamiltonian flow on $(T^{*}M,\omega_{c},H)$ identified with a flow on $TM$ by the musical isomorphism, coincides with the $\MP$-flow. Furthermore, the projected trajectories of the flow on $TM$ to $M$ satisfy the equation of $\MP$-geodesics.
\end{enumerate}
\end{lemma}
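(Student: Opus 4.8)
The plan is to anchor the four statements to a single direct computation and then propagate it through the standard correspondences of symplectic mechanics (\cite{am}*{Chapter 3}): the Legendre transform, the fibrewise ``magnetic shift'', and the musical isomorphism. I would fix once and for all the convention $\iota_{X_F}\omega=dF$ for a Hamiltonian vector field and carry it through every identification, so that the flows themselves (and not merely their unparametrised orbits) match.

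First I would prove (1) by computing the Euler--Lagrange equations of $L(x,v)=\frac12|v|_g^2-\alpha_x(v)-U(x)$ in local coordinates. Writing $\partial_{v^k}L=g_{kj}v^j-\alpha_k$ and expanding $\frac{d}{dt}\partial_{v^k}L-\partial_{x^k}L=0$, the metric terms assemble into the Christoffel symbols and one obtains
\[ g_{km}\bigl(\ddot x^m+\Gamma^m_{ij}\dot x^i\dot x^j\bigr)=(\partial_i\alpha_k-\partial_k\alpha_i)\dot x^i-\partial_k U. \]
Since $\Omega=d\alpha$ gives $\Omega_{ik}=\partial_i\alpha_k-\partial_k\alpha_i$, and the defining relation $\Omega_x(u,v)=(Y_xu,v)_g$ gives $\Omega_{ik}\dot x^i=g_{km}(Y\dot x)^m$, this is exactly the index-lowered form of $\nabla_{\dot\sigma}\dot\sigma=Y(\dot\sigma)-\nabla U$, i.e.\ \eqref{eq:mp-geo}. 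Thus the Euler--Lagrange flow of $L$ is the $\MP$-flow, proving (1).

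Next I would deduce (4) from (1). Since $g$ is positive definite, $L$ is hyperregular and its fibre derivative is the diffeomorphism $(x,v)\mapsto(x,v^\flat-\alpha)$; a short Legendre computation shows the associated Hamiltonian is precisely $H(x,\xi)=\frac12|\xi+\alpha|_g^2+U$. The standard Lagrangian--Hamiltonian correspondence then conjugates the Euler--Lagrange flow to the Hamiltonian flow of $H$ on $(T^*M,\omega_c)$, giving (4). To reach (3) I would use the fibrewise translation $s_\alpha(x,\xi)=(x,\xi+\alpha_x)$: because $p\circ s_\alpha=p$, it pulls the tautological one-form $\theta$ back to $\theta+p^*\alpha$, hence $s_\alpha^*\omega_c=\omega_c+p^*d\alpha=\omega^*$, while $H\circ s_\alpha^{-1}=\frac12|\xi|_g^2+U$. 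Thus $s_\alpha$ conjugates the flow of (4) to the flow of $\frac12|\xi|_g^2+U$ on $(T^*M,\omega^*)$, and since $s_\alpha$ covers the identity on $M$ the base curves are unchanged, proving (3). Finally (2) follows from (3) through the musical isomorphism $\flat\colon TM\to T^*M$: it satisfies $p\circ\flat=\pi$, $\flat^*\omega_c=\omega_0$ (this is how $\omega_0$ was defined) and $\flat^*p^*\Omega=\pi^*\Omega$, so $\flat^*\omega^*=\omega$ and $\flat^*\bigl(\frac12|\xi|_g^2+U\bigr)=E$; hence $\flat$ is a symplectomorphism intertwining the two Hamiltonians, and their flows correspond with identical base projections. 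The ``furthermore'' clauses follow at once, since every map in the chain fixes the projection to $M$ and the bottom of the chain is the $\MP$-geodesic equation from (1).

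The main obstacle I anticipate is bookkeeping rather than conceptual: getting every sign consistent across the Legendre transform, the magnetic shift, and the musical isomorphism, so that the three conjugacies compose correctly on parametrised orbits and not merely on unparametrised ones. In particular the identity $s_\alpha^*\omega_c=\omega_c+p^*d\alpha$ depends on the chosen sign in $\omega_c=\pm d\theta$, which must be reconciled with the coordinate formula for $\omega_0$ quoted in the excerpt. A secondary point is that (2) and (3) are phrased for a general closed $\Omega$, whereas the route through (1) and (4) uses a primitive $\alpha$; on a simple manifold $\Omega$ is exact, so this is harmless, but for the general statement I would either invoke a local primitive (the flow is a local object) or, more cleanly, verify (2) directly by computing $\iota_{X_E}\omega=dE$ against the coordinate expression for $\omega=\omega_0+\pi^*\Omega$ and reading off the $\MP$-geodesic equation, which never requires exactness.
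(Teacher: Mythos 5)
Your proposal is correct and uses the same ingredients as the paper --- the coordinate Euler--Lagrange computation, the Legendre transform $(x,v)\mapsto(x,v^\flat-\alpha)$, the shift $\xi\mapsto\xi\pm\alpha$, and the musical isomorphism --- but it traverses them in the opposite direction. The paper proves (1) and (2) independently by direct computation (solving $\iota_{X_E}\omega=dE$ in coordinates for (2)), then obtains (3) from (2) by the substitution $v^i=g^{ik}\xi_k$ and (4) from (3) by the substitution $\eta=\xi+\alpha$ (plus a citation of the hyperregular Lagrangian--Hamiltonian correspondence). You instead make (1) the sole computational anchor and derive (4), then (3), then (2) as a chain of conjugacies by structure-preserving maps. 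Your version is arguably cleaner conceptually, but it has the drawback you yourself identify: the chain $(1)\to(4)\to(3)\to(2)$ requires a global primitive $\alpha$, whereas (2) and (3) are stated for an arbitrary closed $\Omega$; your proposed fallback of verifying (2) directly from $\iota_{X_E}\omega=dE$ is precisely what the paper does, so adopting it makes the two arguments converge. The one point you must actually nail down (and correctly flag) is the sign bookkeeping in $s_\alpha^*\omega_c$: with the paper's convention $\omega_c=\sum dx^i\wedge d\xi_i=-d\theta$ one gets $s_\alpha^*\omega_c=\omega_c-p^*d\alpha$, so the symplectomorphism $(T^*M,\omega^*)\to(T^*M,\omega_c)$ intertwining $\frac12|\xi|_g^2+U$ with $H$ is $\xi\mapsto\xi-\alpha$ rather than $\xi\mapsto\xi+\alpha$; this is consistent with the paper's substitution $\eta=\xi+\alpha$ read in the other direction, and once fixed the whole chain closes up. Neither gap is fatal, and both are explicitly acknowledged in your write-up.
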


\begin{proof} \hfill
    \begin{enumerate}
    \item First, we observe that
    \[ \frac{\partial L}{\partial x^{k}}=\frac{1}{2} (\partial_{x^{k}}g_{ij})v^{i}v^{j}-(\partial_{x^{k}}\alpha_{i})v^{i}-\partial_{x^{k}}U. \]
    On the other hand,
    \[ \frac{\partial L}{\partial v^{k}}=g_{ik}v^{i}-\alpha_{k}. \]
    So,
    \[ \frac{d}{dt} \left( \frac{\partial L}{\partial v^{k}}(\sigma,\dot{\sigma}) \right)=\frac{1}{2}(\partial_{x^{\ell}}g_{ik})\dot{\sigma}^{\ell}\dot{\sigma}^{i}+\frac{1}{2}(\partial_{x^{i}} g_{\ell k}) \dot{\sigma}^{i} \dot{\sigma}^{\ell}+g_{ik} \ddot{\sigma}^{i}-(\partial_{x^{\ell}}\alpha_{k})\dot{\sigma}^{\ell}. \]
    Putting all together and relabeling the indices we have
    \[ g_{ik}\ddot{\sigma}^{i}+\frac{1}{2}(\partial_{x^{\ell}}g_{ik}+\partial_{x^{i}}g_{\ell k}-\partial_{x^{k}}g_{i\ell} )\dot{\sigma}^{i}\dot{\sigma}^{\ell}=(\partial_{x^{i}}\alpha_{k}-\partial_{x^{k}}\alpha_{i})\dot{\sigma}^{i}-\partial_{x^{k}}U. \]
    Multiplying by $g^{jk}$ and using that $g^{jk}(\partial_{x^{i}}\alpha_{k}-\partial_{x^{k}}\alpha_{i})=g^{jk}\Omega_{ik}=Y_{i}^{j}$, we find
    \[ \ddot{\sigma}^{j}+\Gamma_{i\ell}^{j}\dot{\sigma}^{i}\dot{\sigma}^{\ell}=Y_{i}^{j}\dot{\sigma}^{i}-g^{jk}\partial_{x^{k}}U, \]
    which is equation \eqref{eq:mp-geo}.
    \item First we compute the Hamiltonian vector field $X_{E}=A^{i}\partial_{x^{i}}+B^{i}\partial_{v^{i}}$ associated to the system. Let $V=C^{i}\partial_{x^{i}}+D^{i}\partial_{v^{i}}$. We compute and obtain
    \begin{align*}
        dE(V) &=\left( \frac{1}{2}(\partial_{x^{k}} g_{ij})v^{i}v^{j}+\partial_{x^{k}}U \right)C^{k}+g_{ik}v^{i}D^{k}, \\
        \omega(X_{H},V)&=g_{ij}A^{i}D^{j}-g_{ij}C^{i}B^{j}+\Omega_{ij}A^{i}C^{j}.
    \end{align*}
    From here, we obtain $A^{i}=v^{i}$ and 
    \[ B^{i}=-\Gamma_{jk}^{i}v^{j}v^{k}+Y_{j}^{i}v^{j}-g^{ij}\partial_{x^{j}}U, \]
    where we used $\Gamma_{jk}^{i}=\frac{1}{2}g^{i\ell}(\partial_{x^{\ell}}g_{jk})$ and $Y_{j}^{i}=g^{ik}\Omega_{kj}$. Hence, the Hamiltonian vector field is
    \[ G_{\MP}:=X_{H}=v^{i}\partial_{x^{i}}+(-\Gamma_{jk}^{i}v^{j}v^{k}+Y_{j}^{i}v^{j}-g^{ij}\partial_{x^{j}}U)\partial_{v^{i}}. \]
    Hence, Hamilton equations are
    \begin{align} \label{eq:hameq_mus1}
        \dot{x}^{i}&=v^{i}, \\ \label{eq:hameq_mus2}
        \dot{v}^{i}&=-\Gamma_{jk}^{i}v^{j}v^{k}+Y_{j}^{i}v^{j}-g^{ij}\partial_{x^{j}}U,
    \end{align}
    which are the equations that define the $\MP$-flow. From here, it is clear that $x$ satisfies \eqref{eq:mp-geo}.
    \item Since this time we are working with the twisted form $\omega_{c}+\pi^{*}\Omega$, the Hamiltonian vector field now is 
    \[ g^{ik}\xi_{k}\partial_{x^{i}}+\left( -\frac{1}{2}(\partial_{x^{i}}g^{jk})\xi_{j}\xi_{k}-\partial_{x^{i}}U+\Omega_{ij}g^{ik}\xi_{k} \right)\partial_{\xi_{i}}. \]
    Then, the Hamilton equations are
    \begin{align} \label{eq:hameq1}
        \dot{x}^{i}&=g^{ik}\xi_{k}, \\ \label{eq:hameq2}
        \dot{\xi}_{i}&=-\frac{1}{2} (\partial_{x^{i}}g^{jk})\xi_{j}\xi_{k}-\partial_{x^{i}}U+\Omega_{ij}g^{jk}\xi_{k}.
    \end{align}
    Letting $v^{i}=g^{ik}\xi_{k}$, we see that $\dot{x}^{i}=v^{i}$ and $\dot{v}^{i}=-g^{im}\partial_{x^{\ell}}g_{mj}v^{\ell}v^{j}+g^{ik}\dot{\xi}_{k}$. Hence, under the musical isomorphism we have that the equations \eqref{eq:hameq1} and \eqref{eq:hameq2} take the form of \eqref{eq:hameq_mus1} and \eqref{eq:hameq_mus2}, respectively. As before, from here we see that $x$ satisfies \eqref{eq:mp-geo}.
    \item Let 
    \begin{align*}
        \mathcal{L} \colon TM & \to T^{*}M, \\
        (x,v) & \mapsto \left(x, \frac{\partial L}{\partial v}(x,v) \right),
    \end{align*}
    be the Legendre transform. Since $\mathcal{L}(x,v)=(x,v^{\flat}-\alpha)$, then $\mathcal{L}^{-1}(x,p)=(x,\alpha^{\sharp}+\xi^{\sharp})$. Note that $H=E \circ \mathcal{L}^{-1}$, and that $L$ is convex and superlinear (see \cite{ci}, \cite{paternain}). Hence, the result follows directly from \cite{ci}*{Proposition 1-4.1} (cf. \cite{arnoldclas}*{Section 15}). 
    
    We also prove the result directly. Since we are working with the canonical form on $T^{*}M$, the Hamiltonian vector field is given by $X_{H}=\partial_{\xi_{i}}H\partial_{x^{i}} -\partial_{x^{i}}H\partial_{\xi_{i}}$. Then, the Hamilton equations are
    \begin{align*}
        \dot{x}^{k}&=g^{ik}(\xi_{i}+\alpha_{i}), \\
        \dot{\xi}_{k}&=-\frac{1}{2}(\partial_{x^{k}}g^{ij})(\xi_{i}+\alpha_{i})(\xi_{j}+\alpha_{j})-g^{ij}(\partial_{x^{k}}\alpha_{i})(\xi_{j}+\alpha_{j})-\partial_{x^{k}}U.
    \end{align*}
    Let $\eta=\xi+\alpha(x)$. Then, $\dot{\eta}_{k}=\dot{\xi}_{k}+(\partial_{x^{i}}\alpha_{k})\dot{x}^{i}$ and $\dot{x}^{k}=g^{ik}\eta_{i}$. Hence, Hamilton equations take the form
    \begin{align*}
        \dot{x}^{k}&=g^{ik}\eta_{i}, \\
        \dot{\eta}_{k}&=-\frac{1}{2}(\partial_{x^{k}}g^{ij})\eta_{i}\eta_{j}-g^{ij}(\partial_{x^{k}}\alpha_{i})\eta_{j}+g^{ij}(\partial_{x^{i}}\alpha_{k})\eta_{j}-\partial_{x^{k}}U.
    \end{align*}
    These equations are no other than \eqref{eq:hameq1} and \eqref{eq:hameq2}. Hence, the conclusions follows from the previous part.
\end{enumerate}
\end{proof}

\begin{rmk}
    By using the formula given in \cite{mazzucchelli}*{page 4}, we note that the Hamiltonian vector field $G_{\MP}$ from the second point in Lemma \ref{lemma:mp_flow} coincides with the Euler--Lagrange field from the first point on that lemma. Clearly, $G_{\MP}$ is also the generator of the $\MP$ flow mentioned in \cite{az}.
\end{rmk}

\begin{bibdiv} 
\begin{biblist}

\bib{am}{book}{
   author={Abraham, Ralph},
   author={Marsden, Jerrold E.},
   title={Foundations of mechanics},
   edition={2},
   note={With the assistance of Tudor Ra\c{t}iu and Richard Cushman},
   publisher={Benjamin/Cummings Publishing Co., Inc., Advanced Book Program,
   Reading, MA},
   date={1978},
   pages={xxii+m-xvi+806},
   isbn={0-8053-0102-X},
   review={\MR{0515141}},
}

\bib{arnoldclas}{book}{
   author={Arnol\cprime d, V. I.},
   title={Mathematical methods of classical mechanics},
   series={Graduate Texts in Mathematics},
   volume={60},
   note={Translated from the 1974 Russian original by K. Vogtmann and A.
   Weinstein;
   Corrected reprint of the second (1989) edition},
   publisher={Springer-Verlag, New York},
   date={1989},
   pages={xvi+516},
   isbn={0-387-96890-3},
   review={\MR{1345386}},
}

\bib{arnold}{collection}{
   title={Dynamical systems. IV},
   series={Encyclopaedia of Mathematical Sciences},
   volume={4},
   edition={revised edition},
   editor={Arnold, V. I.},
   editor={Novikov, S. P.},
   note={Symplectic geometry and its applications;
   A translation of {\it Current problems in mathematics. Fundamental
   directions, Vol. 4 (Russian)}, Akad. Nauk SSSR, Vsesoyuz. Inst. Nauchn. i
   Tekhn. Inform., Moscow, 1985 [MR0842907 (87j:58032)];
   Translated by G. Wasserman;
   Translation edited by V. I. Arnold and S. P. Novikov},
   publisher={Springer-Verlag, Berlin},
   date={2001},
   pages={vi+336},
   isbn={3-540-62635-2},
   review={\MR{1866630}},
   doi={10.1007/978-3-662-06791-8},
}

\bib{akn}{book}{
   author={Arnold, Vladimir I.},
   author={Kozlov, Valery V.},
   author={Neishtadt, Anatoly I.},
   title={Mathematical aspects of classical and celestial mechanics},
   series={Encyclopaedia of Mathematical Sciences},
   volume={3},
   edition={3},
   note={[Dynamical systems. III];
   Translated from the Russian original by E. Khukhro},
   publisher={Springer-Verlag, Berlin},
   date={2006},
   pages={xiv+518},
   isbn={978-3-540-28246-4},
   isbn={3-540-28246-7},
   review={\MR{2269239}},
}

\bib{az}{article}{
   author={Assylbekov, Yernat M.},
   author={Zhou, Hanming},
   title={Boundary and scattering rigidity problems in the presence of a magnetic field and a potential},
   journal={Inverse Probl. Imaging},
   volume={9},
   date={2015},
   number={4},
   pages={935--950},
   issn={1930-8337},
   review={\MR{3461698}},
   doi={10.3934/ipi.2015.9.935},
}

\bib{bg}{article}{
   author={Bartolo, Rossella},
   author={Germinario, Anna},
   title={Convexity conditions on the boundary of a stationary spacetime and
   applications},
   journal={Commun. Contemp. Math.},
   volume={11},
   date={2009},
   number={5},
   pages={739--769},
   issn={0219-1997},
   review={\MR{2561935}},
   doi={10.1142/S0219199709003545},
}

\bib{cggmw}{article}{
   author={Chanda, Sumanto},
   author={Gibbons, G. W.},
   author={Guha, Partha},
   author={Maraner, Paolo},
   author={Werner, Marcus C.},
   title={Jacobi-Maupertuis Randers-Finsler metric for curved spaces and the
   gravitational magnetoelectric effect},
   journal={J. Math. Phys.},
   volume={60},
   date={2019},
   number={12},
   pages={122501, 9},
   issn={0022-2488},
   review={\MR{4038551}},
   doi={10.1063/1.5098869},
}

\bib{ci}{book}{
   author={Contreras, Gonzalo},
   author={Iturriaga, Renato},
   title={Global minimizers of autonomous Lagrangians},
   series={22$^{\rm o}$ Col\'{o}quio Brasileiro de Matem\'{a}tica. [22nd
   Brazilian Mathematics Colloquium]},
   publisher={Instituto de Matem\'{a}tica Pura e Aplicada (IMPA), Rio de
   Janeiro},
   date={1999},
   pages={148},
   isbn={85-244-0151-6},
   review={\MR{1720372}},
}

\bib{dpsu}{article}{
   author={Dairbekov, Nurlan S.},
   author={Paternain, Gabriel P.},
   author={Stefanov, Plamen},
   author={Uhlmann, Gunther},
   title={The boundary rigidity problem in the presence of a magnetic field},
   journal={Adv. Math.},
   volume={216},
   date={2007},
   number={2},
   pages={535--609},
   issn={0001-8708},
   review={\MR{2351370}},
   doi={10.1016/j.aim.2007.05.014},
}

\bib{du}{article}{
   author={Dairbekov, Nurlan},
   author={Uhlmann, Gunther},
   title={Reconstructing the metric and magnetic field from the scattering
   relation},
   journal={Inverse Probl. Imaging},
   volume={4},
   date={2010},
   number={3},
   pages={397--409},
   issn={1930-8337},
   review={\MR{2671103}},
   doi={10.3934/ipi.2010.4.397},
}

\bib{germinario}{article}{
   author={Germinario, Anna},
   title={Geodesics in stationary spacetimes and classical Lagrangian
   systems},
   journal={J. Differential Equations},
   volume={232},
   date={2007},
   number={1},
   pages={253--276},
   issn={0022-0396},
   review={\MR{2281195}},
   doi={10.1016/j.jde.2006.09.009},
}

\bib{gmt}{article}{
   author={Guillarmou, Colin},
   author={Mazzucchelli, Marco},
   author={Tzou, Leo},
   title={Boundary and lens rigidity for non-convex manifolds},
   journal={Amer. J. Math.},
   volume={143},
   date={2021},
   number={2},
   pages={533--575},
   issn={0002-9327},
   review={\MR{4234974}},
   doi={10.1353/ajm.2021.0012},
}

\bib{herrerossca}{article}{
   author={Herreros, Pilar},
   author={Vargo, James},
   title={Scattering rigidity for analytic Riemannian manifolds with a
   possible magnetic field},
   journal={J. Geom. Anal.},
   volume={21},
   date={2011},
   number={3},
   pages={641--664},
   issn={1050-6926},
   review={\MR{2810847}},
   doi={10.1007/s12220-010-9162-z},
}

\bib{herrerosmag}{article}{
   author={Herreros, Pilar},
   title={Scattering boundary rigidity in the presence of a magnetic field},
   journal={Comm. Anal. Geom.},
   volume={20},
   date={2012},
   number={3},
   pages={501--528},
   issn={1019-8385},
   review={\MR{2974204}},
   doi={10.4310/CAG.2012.v20.n3.a3},
}

\bib{iw}{article}{
   author={Ilmavirta, Joonas},
   author={Waters, Alden},
   title={Recovery of the sound speed for the acoustic wave equation from
   phaseless measurements},
   journal={Commun. Math. Sci.},
   volume={16},
   date={2018},
   number={4},
   pages={1017--1041},
   issn={1539-6746},
   review={\MR{3878151}},
   doi={10.4310/CMS.2018.v16.n4.a5},
}

\bib{jo2007}{article}{
   author={Jollivet, Alexandre},
   title={On inverse problems in electromagnetic field in classical
   mechanics at fixed energy},
   journal={J. Geom. Anal.},
   volume={17},
   date={2007},
   number={2},
   pages={275--319},
   issn={1050-6926},
   review={\MR{2320165}},
   doi={10.1007/BF02930725},
}

\bib{kozlov}{article}{
   author={Kozlov, V. V.},
   title={Calculus of variations in the large and classical mechanics},
   language={Russian},
   journal={Uspekhi Mat. Nauk},
   volume={40},
   date={1985},
   number={2(242)},
   pages={33--60, 237},
   issn={0042-1316},
   review={\MR{0786086}},
}

\bib{lz}{article}{
   author={Lai, Ru-Yu},
   author={Zhou, Hanming},
   title={Inverse source problems in transport equations with external
   forces},
   journal={J. Differential Equations},
   volume={302},
   date={2021},
   pages={728--752},
   issn={0022-0396},
   review={\MR{4316016}},
   doi={10.1016/j.jde.2021.09.011},
}

\bib{losu}{article}{
   author={Lassas, Matti},
   author={Oksanen, Lauri},
   author={Stefanov, Plamen},
   author={Uhlmann, Gunther},
   title={On the inverse problem of finding cosmic strings and other
   topological defects},
   journal={Comm. Math. Phys.},
   volume={357},
   date={2018},
   number={2},
   pages={569--595},
   issn={0010-3616},
   review={\MR{3767703}},
   doi={10.1007/s00220-017-3029-0},
}

\bib{maraner}{article}{
   author={Maraner, Paolo},
   title={On the Jacobi metric for a general Lagrangian system},
   journal={J. Math. Phys.},
   volume={60},
   date={2019},
   number={11},
   pages={112901, 10},
   issn={0022-2488},
   review={\MR{4035801}},
   doi={10.1063/1.5124142},
}

\bib{mazzucchelli}{book}{
   author={Mazzucchelli, Marco},
   title={Critical point theory for Lagrangian systems},
   series={Progress in Mathematics},
   volume={293},
   publisher={Birkh\"{a}user/Springer Basel AG, Basel},
   date={2012},
   pages={xii+187},
   isbn={978-3-0348-0162-1},
   isbn={978-3-0348-0163-8},
   review={\MR{3015021}},
   doi={10.1007/978-3-0348-0163-8},
}

\bib{michel}{article}{
   author={Michel, Ren\'{e}},
   title={Sur la rigidit\'{e} impos\'{e}e par la longueur des
   g\'{e}od\'{e}siques},
   language={French},
   journal={Invent. Math.},
   volume={65},
   date={1981/82},
   number={1},
   pages={71--83},
   issn={0020-9910},
   review={\MR{0636880}},
   doi={10.1007/BF01389295},
}

\bib{mr23}{article}{
   author={Marshall Reber, J.},
   title={Deformative magnetic marked length spectrum rigidity},
   journal={Bull. London Math. Soc.},
   date={2023},
   issn={0024-6093},
   doi={doi.org/10.1112/blms.12911},
}

\bib{paternain}{book}{
   author={Paternain, Gabriel P.},
   title={Geodesic flows},
   series={Progress in Mathematics},
   volume={180},
   publisher={Birkh\"{a}user Boston, Inc., Boston, MA},
   date={1999},
   pages={xiv+149},
   isbn={0-8176-4144-0},
   review={\MR{1712465}},
   doi={10.1007/978-1-4612-1600-1},
}

\bib{psu}{book}{
   author={Paternain, Gabriel},
   author={Salo, Mikko},
   author={Uhlmann, Gunther},
   title={Geometric inverse problems---with emphasis on two dimensions},
   series={Cambridge Studies in Advanced Mathematics},
   volume={204},
   publisher={Cambridge University Press, Cambridge},
   date={2023},
   pages={xxiv+344},
   isbn={978-1-316-51087-2},
   review={\MR{4520155}},
}

\bib{pu2005}{article}{
   author={Pestov, Leonid},
   author={Uhlmann, Gunther},
   title={Two dimensional compact simple Riemannian manifolds are boundary
   distance rigid},
   journal={Ann. of Math. (2)},
   volume={161},
   date={2005},
   number={2},
   pages={1093--1110},
   issn={0003-486X},
   review={\MR{2153407}},
   doi={10.4007/annals.2005.161.1093},
}

\bib{plamen}{article}{
   author={Stefanov, Plamen},
   title={The Lorentzian scattering rigidity problem and rigidity of stationary  metrics},
   date={2023},
   eprint={2212.13213},
   status={preprint},
}

\bib{su}{article}{
   author={Stefanov, Plamen},
   author={Uhlmann, Gunther},
   title={Boundary rigidity and stability for generic simple metrics},
   journal={J. Amer. Math. Soc.},
   volume={18},
   date={2005},
   number={4},
   pages={975--1003},
   issn={0894-0347},
   review={\MR{2163868}},
   doi={10.1090/S0894-0347-05-00494-7},
}

\bib{suv}{article}{
   author={Stefanov, Plamen},
   author={Uhlmann, Gunther},
   author={Vasy, Andr\'{a}s},
   title={Local and global boundary rigidity and the geodesic X-ray
   transform in the normal gauge},
   journal={Ann. of Math. (2)},
   volume={194},
   date={2021},
   number={1},
   pages={1--95},
   issn={0003-486X},
   review={\MR{4276284}},
   doi={10.4007/annals.2021.194.1.1},
}

\bib{zhou18}{article}{
   author={Zhou, Hanming},
   title={Lens rigidity with partial data in the presence of a magnetic
   field},
   journal={Inverse Probl. Imaging},
   volume={12},
   date={2018},
   number={6},
   pages={1365--1387},
   issn={1930-8337},
   review={\MR{3917833}},
   doi={10.3934/ipi.2018057},
}

\end{biblist}
\end{bibdiv}

\end{document}